\documentclass[11pt,a4paper]{article}
\usepackage[T1]{fontenc}
\usepackage[utf8]{inputenc}
\usepackage[margin=1in]{geometry}
\usepackage{amsmath,amssymb,amsthm}
\usepackage{graphicx}
\usepackage{enumitem}
\usepackage{booktabs}
\usepackage{multirow}
\usepackage{threeparttable}
\graphicspath{{fig/}}
\usepackage{url}
\usepackage[hidelinks]{hyperref}
\newtheorem{theorem}{Theorem}[section]
\newtheorem{lemma}[theorem]{Lemma}
\newtheorem{conjecture}[theorem]{Conjecture}
\newtheorem{corollary}[theorem]{Corollary}
\newtheorem{proposition}[theorem]{Proposition}
\newtheorem{definition}[theorem]{Definition}
\newtheorem{remark}[theorem]{Remark}
\newtheorem{problem}[theorem]{Problem}

\newcommand{\tr}{\operatorname{tr}}

\newcommand{\ind}{\operatorname{ind}}
\newcommand{\Forb}{\operatorname{Forb}}

\title{Generalized spectral closedness of $\mathcal{F}$-free graph classes}
\author
{
	Wei Wang$^{\rm a}$\thanks{Corresponding author.\\
	E-mail address:  wangwei.math@gmail.com (Wei Wang), tangquanyu827@gmail.com (Quanyu Tang)}\quad\quad
	Quanyu Tang$^{\rm b}$
	\\
	{\footnotesize$^{\rm a}$School of Mathematics, Physics and Finance, Anhui Polytechnic University, Wuhu 241000, P. R. China}\\
	{\footnotesize$^{\rm b}$School of Mathematics and Statistics, Xi'an Jiaotong University, Xi'an 710049, P. R. China}
}
\date{}

\begin{document}
	
\maketitle
	
\begin{abstract}
In this paper, we investigate the generalized spectral closedness of graph classes defined by a family $\mathcal{F}$ of forbidden induced subgraphs. To systematically study this property, we introduce a novel combinatorial concept of patterned closed walks (or $\beta$-closed walks), which naturally interlaces the edges of a graph with those of its complement. By establishing the induced-subgraph expansion of these $\beta$-closed walk counts, we obtain an algebraic sufficient condition for generalized spectral closedness based on the existence of a walk-realizable $\mathcal{F}$-supporter. Crucially, the search for such a walk-realizable supporter is reduced to a linear programming feasibility problem. As primary applications of this computational framework, we prove that the classes of threshold graphs and chain graphs are generalized spectrally closed.
\end{abstract}
	
\noindent\textbf{Keywords:} generalized spectrum; determined by generalized spectrum;  graph classes; $\mathcal{F}$-free graphs; closed walks.
	
	\noindent\textbf{Mathematics Subject Classification:} 05C50

\section{Introduction}

In spectral graph theory, the \emph{spectrum} of a graph $G$ refers to the multiset of eigenvalues of its adjacency matrix $A(G)$. To capture richer structural information, the concept of the \emph{generalized spectrum} is frequently considered, which refers to the ordinary spectrum of $G$ together with the spectrum of its complement, $\overline{G}$. Two graphs $G$ and $H$ are \emph{generalized cospectral} if they share the same generalized spectrum. A graph $G$ is \emph{determined by its generalized spectrum} (DGS) if any graph generalized cospectral with $G$ is isomorphic to $G$. Moving beyond individual graphs to families of graphs, for a given graph class $\mathcal{C}$, we say $\mathcal{C}$ is \emph{generalized spectrally closed} if no graph in $\mathcal{C}$ can be generalized cospectral with a graph outside $\mathcal{C}$.

In recent years, the DGS property of individual graphs has been extensively studied. For instance, Wang \cite{wang2017} proposed a simple arithmetic criterion for a graph to be DGS. This criterion was recently improved by Wang, Wang, and Zhu \cite{wang2023}, and further generalized by Guo, Wang, and Wang \cite{guo2025} using the primary decomposition theorem. 

While the generalized spectral characterization of individual graphs is well-explored, understanding the generalized spectral closedness of specific graph classes remains a fundamental and natural problem. Traditionally, significant attention has been given to graph classes defined by global properties, such as having a perfect matching, being Hamiltonian, or having a specific chromatic index or connectivity. Clarifying whether these important families are spectrally closed provides deep insights into the separating power of graph spectra. Interestingly, a growing body of literature indicates that these global properties are often not generalized spectrally closed, as demonstrated by various constructions of regular cospectral counterexamples \cite{blazsik2015, etesami2020, yan2022, haemers2020, liu2020}.

Alongside global properties, local structural constraints offer another prominent and fundamental way to define graph classes, specifically hereditary graph classes characterized by forbidden induced subgraphs. Let $\mathcal{F}$ be a family of graphs. A graph $G$ is said to be \emph{$\mathcal{F}$-free} if it contains no induced subgraph isomorphic to any graph in $\mathcal{F}$; the class of $\mathcal{F}$-free graphs is denoted by $\Forb(\mathcal{F})$. Many well-known graph classes can be elegantly characterized in this manner. For example, if $\mathcal{F} = \{C_3, C_5, \ldots\}$ is the collection of all odd cycles, then $\Forb(\mathcal{F})$ is the class of bipartite graphs. Similarly, the classes of threshold graphs and cographs are exactly $\Forb(2K_2, P_4, C_4)$ and $\Forb(P_4)$, respectively. See Table \ref{fc} for more examples.

\begin{table}[htbp]
	\centering
	\caption{Some graph classes defined by forbidden induced subgraphs.} 
	\label{fc} 
	\begin{threeparttable}   
		\begin{tabular}{@{}ll@{}}     
			\toprule      
			\textbf{Graph class}  & \textbf{Forbidden induced subgraphs} \\  
			\midrule   
			Cluster graphs \cite{brandstadt1999}       & $P_3$ \\  
			Claw-free graphs \cite{faudree1997}      & $K_{1,3}$ \\
			Cographs \cite{corneil1981}              & $P_4$ \\
			Bull-free graphs \cite{chvatal1987}      & the bull graph\tnote{a} \\ 
			Trivially perfect graphs \cite{golumbic1978,yan1996} & $P_4,\; C_4$ \\
			Threshold graphs \cite{chvatal1977}      & $2K_2,\; P_4,\; C_4$ \\
			Split graphs \cite{foldes1977}         & $2K_2,\; C_4,\; C_5$ \\ 
			Chain graphs \cite{yannakakis1981,hammer1990}         & $2K_2,\; C_3,\; C_5$ \\
			\bottomrule  
		\end{tabular}  
		\begin{tablenotes}
			\small
			\item[a] The bull graph is a 5-vertex graph consisting of a triangle and two disjoint pendant edges (e.g., $H_{25}$ in Table~\ref{smallgraphs}).
		\end{tablenotes}  
	\end{threeparttable}
\end{table}

Given the structural importance of hereditary graph classes, investigating their generalized spectral closedness is just as compelling as studying global properties. Clarifying whether these $\Forb(\mathcal{F})$ classes are generalized spectrally closed is crucial for mapping the boundaries of spectral characterization and deepening our understanding of what structural information the spectrum actually captures. However,  research in this direction remains largely absent. As an exploratory attempt, this paper aims to provide a systematic and algebraic framework to test the generalized spectral closedness of such hereditary graph classes.

The primary objective of this paper is to partially address the following problem.

\begin{problem}
	For which finite families of graphs $\mathcal{F}$ is the graph class $\Forb(\mathcal{F})$ generalized spectrally closed?
\end{problem}

Naturally, a similar problem can be proposed in the context of the ordinary spectrum instead of the generalized spectrum. A well-known result in spectral graph theory states that $\Forb(C_3)$, the class of triangle-free graphs, is spectrally closed. Indeed, the number of triangles in a graph $G$ with adjacency matrix $A$ is equal to $\frac{1}{6}\tr (A^3)$ and hence is determined by its spectrum. Thus, any graph cospectral with a triangle-free graph must also be triangle-free, that is, the class $\Forb(C_3)$ is spectrally closed. We note by definition that the property of spectral closedness is stronger than that of generalized spectral closedness. In other words, a spectrally closed class is necessarily generalized spectrally closed, but the converse may not be true in general; see Remark \ref{cnc}.

Studies of the generalized spectral closedness of graph classes may have potential use in the problem of generalized spectral characterizations of individual graphs. Precisely, in order to show each graph in a given graph class $\mathcal{C}$ is DGS, it suffices to show that the following two assertions hold simultaneously:
\begin{itemize}
	\item  no two (non-isomorphic) graphs in $\mathcal{C}$ are generalized cospectral;
	\item the class $\mathcal{C}$ is generalized spectrally closed.
\end{itemize}

Let $\mathcal{C}=\Forb(2K_2,P_4,C_4)$ be the class of threshold graphs. A beautiful result of Lazzarin et al. \cite{lazzarin2019} states that no two graphs in $\mathcal{C}$ are cospectral. Thus, if we can show that the class $\mathcal{C}$ is spectrally closed, then each graph in $\mathcal{C}$ is determined by its spectrum (DS). Recall that the number of threshold graphs of order $n$ is exactly $2^{n-1}$. Thus, assuming the spectral closedness of $\mathcal{C}$, the number of DS graphs of order $n$ is at least $2^{n-1}$, which would improve the recent lower bound $(1+\epsilon)^n$ due to Koval and Kwan \cite{koval2024}. The spectral closedness of the class of threshold graphs is currently unknown. However, as an important example of our main results, we prove that the corresponding weaker version for the generalized spectrum turns out to be true.

\begin{theorem}\label{tgsc}
	The class of threshold graphs is generalized spectrally closed.
\end{theorem}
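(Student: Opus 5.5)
The strategy is to show that generalized cospectral graphs have the same number of induced copies of $2K_2$, $P_4$ and $C_4$ combined. More precisely, we aim to find a graph invariant $\Phi(G)$ with three properties:
\begin{itemize}
\item it is determined by the generalized spectrum;
\item it can be written as a nonnegative combination $\Phi(G)=\sum_{H} c_H\,\ind(H,G)$ of induced subgraph counts;
\item $c_H>0$ for $H\in\{2K_2,P_4,C_4\}$ and $c_H=0$ whenever $H$ is itself a threshold graph.
\end{itemize}
Given such a $\Phi$, suppose $G$ is threshold and $G'$ is generalized cospectral with $G$. Then $\Phi(G')=\Phi(G)=0$, and since all coefficients are nonnegative, $\ind(H,G')=0$ for each forbidden $H$. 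Hence $G'$ is threshold.

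The source of spectrally determined invariants will be closed walk counts that interlace $A=A(G)$ and $\overline{A}=J-I-A$. Generalized cospectrality is equivalent to the existence of an orthogonal $Q$ with $Q^TAQ=A'$ and $Qe=e$, so $Q^TJQ=J$. Therefore, for every word $\beta$ in the letters $A$ and $\overline{A}$, the trace $\tr(\beta(A,\overline{A}))$ is a generalized spectral invariant. Such a trace counts $\beta$-closed walks, meaning closed walks whose $i$-th step uses an edge of $G$ or of $\overline{G}$ according to the $i$-th letter of $\beta$. Every such walk spans some vertex set $S$ of size at most $|\beta|$, and whether it is a valid $\beta$-walk depends only on $G[S]$. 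So we get the induced-subgraph expansion
\[
\tr\beta(A,\overline{A})=\sum_{H} w_\beta(H)\,\ind(H,G),
\]
where $w_\beta(H)$ counts $\beta$-closed walks in $H$ that visit every vertex of $H$. Since $2K_2$, $P_4$ and $C_4$ all have four vertices, words of length $4$ (and possibly up to $6$ or $8$) should suffice.

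The central step is to choose real weights $x_\beta$ such that $\Phi=\sum_\beta x_\beta \tr\beta$ satisfies $\sum_\beta x_\beta w_\beta(H)\ge 0$ for every graph $H$ on at most $|\beta|$ vertices. We also need equality for every threshold $H$ and strict positivity on $2K_2$, $P_4$ and $C_4$. This is a finite linear programming feasibility problem over the graphs with at most $L$ vertices, where $L$ is the maximal word length. An instructive first candidate is the word $A\overline{A}A\overline{A}$. In $2K_2$, $C_4$ and $P_4$ one can walk along an edge, a non-edge, an edge and a non-edge and return to the start; for example, in $2K_2=\{ab,cd\}$ the walk $a\,b\,c\,d\,a$ alternates in exactly this way. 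Terms of $\tr(A\overline{A}A\overline{A})$ also arise from smaller subgraphs such as $P_3$ and $K_2\cup K_1$, through degenerate walks that revisit vertices; these lie on at most three vertices. I would cancel them by subtracting lower-order traces, such as $\tr(A\overline{A})$, $\tr(A^2\overline{A}^2)$ and $\tr(A^2)$, each of which is spectral.

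Concretely, I would set $\Phi=\tr(A\overline{A}A\overline{A})-\tr(A^2\overline{A}^2)+(\text{corrections})$. The reason is that $\tr(A\overline{A}A\overline{A})-\tr(A^2\overline{A}^2)=-\tfrac12\tr\big([A,\overline{A}]^2\big)=\tfrac12\|[A,\overline{A}]\|_F^2\ge 0$. Since $[A,\overline{A}]=[A,J]$, this quantity is $\tfrac12\|Ae\,e^T-e\,e^TA\|_F^2$, which depends only on the degrees and vanishes only for regular graphs, so this combination alone does not work. The main obstacle is therefore the LP itself: finding a combination that is nonnegative on every graph with up to $L$ vertices, zero on all threshold graphs, and positive on the three obstructions. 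I would solve it by computer, enumerating all graphs with at most $6$ vertices together with the words of length at most $6$. The resulting certificate would be a rational vector $x$, verified by exact arithmetic. Degenerate walks that visit fewer vertices are automatically accounted for, because the expansion sums over all induced subgraphs.
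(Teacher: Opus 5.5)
Your framework is exactly the paper's: a nonnegative combination of induced-subgraph counts that vanishes precisely on threshold graphs, realized as a linear combination of traces of words in $A$ and $\overline{A}$. There is a genuine gap, however. You reduce the theorem to the feasibility of a linear program but never establish that feasibility. ``I would solve it by computer'' is not a proof, and feasibility is not automatic: for split graphs the analogous LP has no solution up to order $7$.

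What is missing is an explicit certificate, and you discarded it on a false premise. Your claim that $\tr(A\overline{A}A\overline{A})$ picks up degenerate contributions from $P_3$ and $K_2\cup K_1$ is wrong. In a $(1,0,1,0)$-closed walk $v_1v_2v_3v_4v_1$, consecutive vertices are distinct. If $v_3=v_1$, then $v_1v_2$ would be both an edge and a non-edge, and $v_4=v_2$ fails for the same reason. So all four vertices are distinct. The induced subgraph on them contains $v_1v_2$ and $v_3v_4$, misses $v_2v_3$ and $v_4v_1$, and leaves the diagonals $v_1v_3$, $v_2v_4$ free. Zero, one, or two diagonals give exactly $2K_2$, $P_4$, $C_4$. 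Counting the walks yields
\[
\tr(A\overline{A}A\overline{A})=8\ind(2K_2,G)+4\ind(P_4,G)+8\ind(C_4,G),
\]
which already has all three properties you require. This single identity is the paper's whole proof; no corrections and no LP are needed.

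Your proposed correction also goes in the wrong direction. The expansion of $\tr(A^2\overline{A}^2)$ contains $2\ind(P_3,G)+2\ind(K_2\cup K_1,G)$, and both of these graphs are threshold. Subtracting it therefore creates negative coefficients on threshold graphs, which would then have to be repaired.

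Your commutator identity has a sign error. In fact $\tr([A,\overline{A}]^2)=2\tr(A\overline{A}A\overline{A})-2\tr(A^2\overline{A}^2)$, and $[A,\overline{A}]$ is skew-symmetric. Hence $\tr(A\overline{A}A\overline{A})-\tr(A^2\overline{A}^2)=-\tfrac12\|[A,\overline{A}]\|_F^2\le 0$; for $P_3$ it equals $-2$. Your conclusion that this combination alone fails is still correct.
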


Noting that generalized cospectrality is a stricter requirement than ordinary cospectrality, the result of Lazzarin et al. \cite{lazzarin2019} clearly implies that no two threshold graphs are generalized cospectral. This, together with Theorem \ref{tgsc}, indicates that each threshold graph is DGS.

Chain graphs, which are often viewed as the bipartite analogue of threshold graphs, constitute another fundamental graph class defined by forbidden induced subgraphs. Similar to threshold graphs, we establish their generalized spectral closedness.

\begin{theorem}\label{cgsc}
	The class of chain graphs is generalized spectrally closed.
\end{theorem}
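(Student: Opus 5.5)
Since $A(\overline{G})=J-I-A(G)$, I will use the standard fact that the generalized spectrum of $G$ determines $\mathbf{1}^{T}A^{k}\mathbf{1}$ for all $k$. More generally, it determines every quantity expressible as a trace of a word in $A$ and $\overline{A}=A(\overline{G})$. Expanding $\overline{A}=J-I-A$ reduces such a word to traces of powers of $A$ and products of walk counts $\mathbf{1}^TA^{k}\mathbf{1}$, all of which are generalized-spectral invariants.

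Chain graphs are $\Forb(2K_2,C_3,C_5)$. Given $G$ a chain graph and $H$ generalized cospectral with it, I must show $H$ has no induced $2K_2$, $C_3$ or $C_5$.

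\textbf{Triangles.} Since $\tr A^3=6\,t(G)$ is a spectral invariant, $H$ is triangle-free.

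\textbf{Five-cycles.} With no triangles, $\tr A^5$ counts closed 5-walks. In a triangle-free graph every closed 5-walk traverses a 5-cycle, so $\tr A^5=10\,c_5$. Hence $H$ has no $C_5$, and in fact no odd closed walks of length $\le 5$. I also need $H$ bipartite-like structure, but only induced $C_5$ matters.

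\textbf{Induced $2K_2$ (the main step).} Here I use the paper's $\beta$-closed walks. I consider the invariant
\[
\tr\big((A\overline{A})^2\big) \quad\text{or}\quad \tr(A\overline{A}A\overline{A}),
\]
which counts closed walks alternating between edges and non-edges. Its induced-subgraph expansion writes it as $\sum_F c_F\,\ind(F,\cdot)$ over graphs $F$ on at most $4$ vertices.

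The goal is to find a linear combination $\Phi$ of such $\beta$-walk counts, together with ordinary invariants (number of edges, $\tr A^4$, $\mathbf{1}^TA^k\mathbf{1}$), whose expansion has the following form. The coefficient of $\ind(2K_2)$ is positive. All other terms are either nonnegative multiples of counts of graphs containing $C_3$, $C_5$ or $2K_2$, or exactly cancel. Then $\Phi(G)=0$ for chain graphs, while $\Phi(H)\ge \ind(2K_2,H)$.

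Concretely, since $H$ is already triangle-free, the 4-vertex induced subgraphs are restricted to $4K_1$, $K_2\cup 2K_1$, $2K_2$, $P_3\cup K_1$, $P_4$, $K_{1,3}$ and $C_4$. I would write the expansions of the candidate invariants in this basis and solve the linear program for coefficients that isolate $2K_2$ with a nonnegative remainder. This is exactly the walk-realizable $\mathcal{F}$-supporter framework, so once a feasible solution is exhibited the paper's sufficient condition yields the theorem.

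A natural first guess is a form related to the number of induced $2K_2$ counted via $\sum_{uv\in E}(\text{non-neighbor pairs})$. The obstacle is whether degree-sequence-type quantities, which are not spectral, can be avoided. The LP search over $\beta$-walks of length up to $6$ or $8$ is where I expect the real difficulty to lie, and I would compute it by machine.
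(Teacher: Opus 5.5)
\textbf{The gap is the $2K_2$ step.} Your first two steps are correct. Indeed, a chain graph is bipartite, so every odd trace vanishes, and any generalized cospectral mate $H$ is therefore bipartite, hence $C_3$- and $C_5$-free. But the step that carries all the content, showing that $H$ has no induced $2K_2$, is never carried out. You describe an LP over $\beta$-walks ``of length up to 6 or 8'', say you would solve it by machine, and express doubt that it is even feasible. Theorem~\ref{main} is only a sufficient condition, and such LPs can be infeasible (the paper reports exactly this for split graphs up to order $7$). So feasibility cannot be assumed: exhibiting the certificate \emph{is} the proof. As written, your argument only establishes that $H$ is bipartite.

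\textbf{How to close it.} Since you already know $H$ is triangle-free, you only need an invariant that agrees with $\ind(2K_2,\cdot)$ on triangle-free graphs, and walks of length $4$ suffice. From Table~\ref{efsb},
\[
\tfrac18 n_{(1,1)}+\tfrac14 n_{(1,1,0)}-\tfrac18 n_{(1,1,1,1)}-\tfrac14 n_{(1,1,1,0)}+\tfrac18 n_{(1,0,1,0)}
=\ind(2K_2,G)-\tfrac32\ind(H_3,G)-3\ind(H_5,G)-2\ind(H_6,G)-\ind(H_7,G),
\]
and $H_3,H_5,H_6,H_7$ all contain triangles. This invariant therefore equals $\ind(2K_2,G)=0$ on the chain graph $G$, hence takes the value $0$ on $H$, where it equals $\ind(2K_2,H)$.

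Your worry about degree-sequence quantities is also unfounded. As you noted yourself, $\sum_v d_v^2=\mathbf{1}^\top A^2\mathbf{1}$ and $2\sum_{uv\in E}d_ud_v=\mathbf{1}^\top A^3\mathbf{1}$ are generalized spectral invariants. For triangle-free graphs one has
\[
\ind(2K_2,\cdot)=\tbinom m2-\sum_v\tbinom{d_v}2-\sum_{uv\in E}(d_u-1)(d_v-1)+2c_4,
\]
where $c_4$ is the number of $4$-cycles, recovered from $\tr A^4=2m+4\sum_v\binom{d_v}2+8c_4$. This finishes the argument directly.

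The paper instead builds a single order-$5$ supporter: it adds $-\frac1{12}n_{(1,1,1)}+\frac1{10}n_{(1,1,1,1,1)}$ to the combination above. This makes the triangle-graph coefficients nonnegative and detects $C_3$ and $C_5$ at the same time, so no preliminary bipartiteness step is needed. Your stepwise route is a legitimate relaxation of that LP, but only once the certificate is actually produced.
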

\begin{remark}\normalfont
Theorem \ref{cgsc} does not imply that each chain graph is DGS. Indeed, in contrast to the aforementioned result of Lazzarin et al.~\cite{lazzarin2019}, An\dj{}eli\'c and Tura \cite{andelic2025} showed that there exist many cospectral chain graphs. Moreover, using the standard  binary sequence representation of chain graphs as in \cite{andelic2025}, generalized cospectral chain graphs are easy to find through an exhaustive search among small chain graphs. For example, the two chain graphs generated by $(0^21^3)(0^11^1)(0^21^1)$ and $(0^21^1)(0^11^2)(0^31^1)$ are generalized cospectral.
\end{remark}
\begin{remark}\normalfont
	\label{cnc}
	Fig.~\ref{chain} gives a pair of cospectral bipartite graphs, where one is a chain graph and the other is not.  This indicates that the class of chain graphs is not spectrally closed.
\end{remark}
	\begin{figure}
	\centering
	\includegraphics[height=3.5cm]{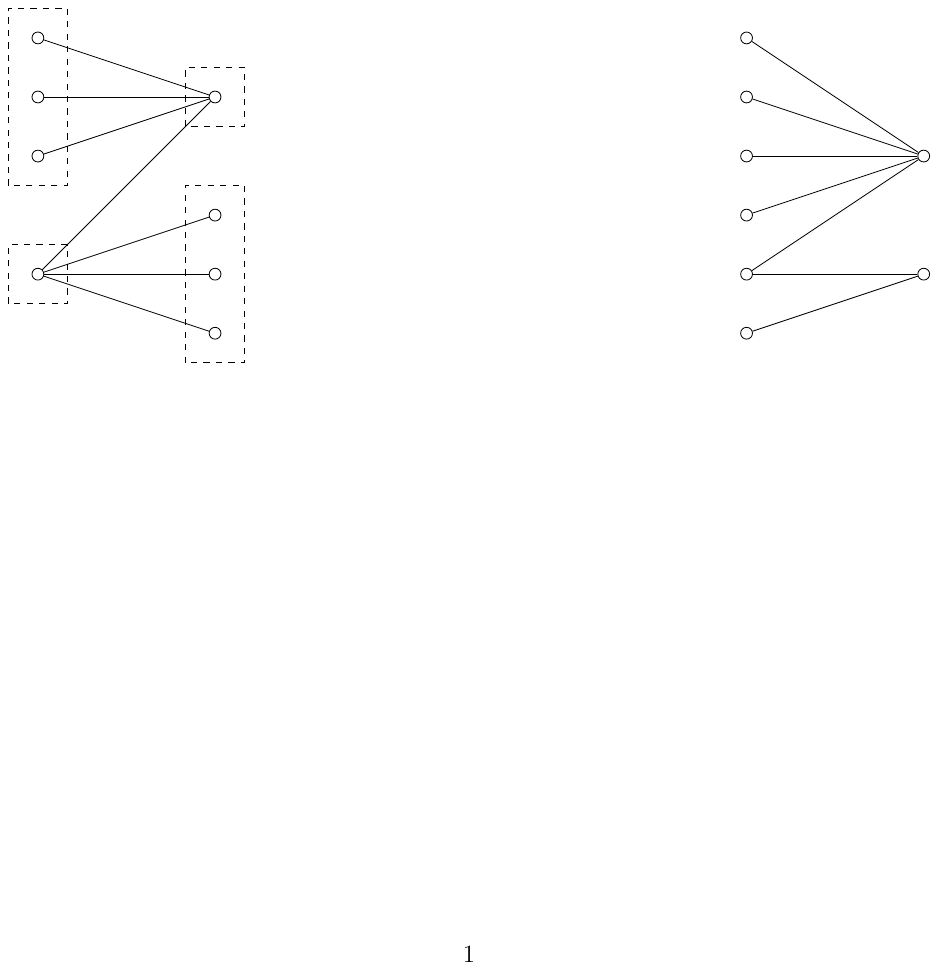}
	\caption{A chain graph (left) and its  cospectral mate that is not a chain graph (right).}
	\label{chain}
\end{figure}
The proofs of Theorem \ref{tgsc} and Theorem \ref{cgsc} are not merely isolated combinatorial arguments; rather, they are specific applications of a broader theoretical framework developed in this paper. To systematically study the generalized spectral closedness of an $\mathcal{F}$-free graph class, we first introduce a purely combinatorial notion of an $\mathcal{F}$-supporter, which detects membership in $\Forb(\mathcal{F})$ through induced-subgraph counts. Since such a supporter has no spectral meaning in general, the crucial step is to determine whether it can be realized as a linear combination of $\beta$-closed walk counts. We call such a certificate a \emph{walk-realizable $\mathcal{F}$-supporter}, and formulate the search for it as a linear programming feasibility problem.  The existence of such a supporter provides a
verifiable algebraic sufficient condition for generalized spectral closedness, yielding an
effective automated framework for certifying several important $\mathcal{F}$-free graph
classes.

The remainder of this paper is organized as follows. In Section 2, we formalize the concept of patterned closed walks associated with a binary vector $\beta$ (or $\beta$-closed walk),  which generalizes the ordinary closed walk in a graph and uses edges and non-edges in each step according to the entry of $\beta$. We show that the total number of $\beta$-closed walks of a graph $G$, denoted $n_{\beta}(G)$,  is an invariant under generalized cospectrality. In Section 3, we further  obtain the induced subgraph expansion of the number $n_{\beta}(G)$, using the principle of inclusion--exclusion. In Section~4, we define $\mathcal{F}$-supporters and their walk-realizable variants,
prove that the existence of a walk-realizable $\mathcal{F}$-supporter guarantees
generalized spectral closedness, and apply this computational framework to analyze
several well-known graph classes listed in Table \ref{fc}. 

\section{\texorpdfstring{$\beta$-closed walks}{beta-closed walks} and generalized spectral invariance}

Let $\beta=(\beta_1,\beta_2,\dots,\beta_k)\in\{0,1\}^k$ be a binary vector of length $k$. A \emph{patterned closed walk associated with $\beta$} in a graph $G$ is a sequence of vertices $v_1, v_2, \dots, v_{k+1}$ such that $v_{k+1}=v_1$, and for each $i \in \{1,2,\dots,k\}$:
\begin{itemize}
	\item if $\beta_i = 1$, $v_i v_{i+1}$ is an edge of $G$;
	\item if $\beta_i = 0$, $v_i v_{i+1}$ is an edge of the complement graph $\overline{G}$.
\end{itemize}

For brevity, we refer to such a walk as a \emph{$\beta$-closed walk}. Note that if $\beta$ is the all-ones vector, a $\beta$-closed walk in $G$ reduces to a standard closed walk in $G$. Conversely, if $\beta$ is the zero vector, a $\beta$-closed walk corresponds to an ordinary closed walk in $\overline{G}$. 

It is a well-known result that the total number of standard closed walks of length $k$ in a graph equals the trace of the $k$-th power of its adjacency matrix \cite{cvetkovic2010}. Generalizing this classical connection, the total number of $\beta$-closed walks can be naturally expressed via the trace of an ordered product of adjacency matrices. To formalize this, for any graph $G$, we define the indicator matrices:
\begin{equation}
	\label{eq:indicator-matrices}
	M_1(G) = A(G), \qquad M_0(G) = A(\overline{G}).
\end{equation}
Consequently, with the notation in \eqref{eq:indicator-matrices}, letting
$n_\beta(G)$ denote the total number of $\beta$-closed walks in $G$, we obtain
\begin{equation}
	\label{eq:beta-trace}
	n_\beta(G) = \operatorname{tr}\left(\prod_{i=1}^k M_{\beta_i}(G)\right).
\end{equation}

To prove that the number of $\beta$-closed walks is a generalized spectral invariant for any $\beta$, we shall employ the following matrix characterization of generalized cospectrality of graphs established by Johnson and Newman \cite{johnson1980}. An orthogonal  matrix $Q$ is  \emph{regular} if each row sum of $Q$ is 1. 
\begin{theorem}[\cite{johnson1980}]
	\label{js}
	Let $G$ and $H$ be graphs on the same number of vertices, with adjacency matrices $A(G)$ and $A(H)$. Then $G$ and $H$ are generalized cospectral if and only if there exists a regular orthogonal matrix $Q$ such that
	\begin{equation}\label{qa}
    Q^\top A(G) Q = A(H).
	\end{equation}
\end{theorem}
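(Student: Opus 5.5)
The plan is to prove both directions of Theorem~\ref{js} through the all-ones vector $\mathbf{1}$ and the matrix $J=\mathbf{1}\mathbf{1}^\top$. Throughout, $A(\overline{G}) = J - I - A(G)$.

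For the direction ``$\Leftarrow$'', suppose $Q$ is regular orthogonal, so $Q\mathbf{1}=\mathbf{1}$. Since $Q^\top Q=I$, we also get $Q^\top\mathbf{1}=Q^\top Q\mathbf{1}=\mathbf{1}$. Hence $Q^\top J Q = (Q^\top\mathbf 1)(Q^\top \mathbf 1)^\top = J$. It follows that
\[
Q^\top A(\overline{G})Q = J - I - Q^\top A(G) Q = A(\overline{H}).
\]
Thus $A(G)$ and $A(H)$ are orthogonally similar, and so are $A(\overline G)$ and $A(\overline H)$. Orthogonally similar matrices have the same spectrum, so $G$ and $H$ are generalized cospectral.

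For the direction ``$\Rightarrow$'', I would first invoke the classical fact (due to Johnson and Newman, or via Cvetković's main-angle argument) that generalized cospectrality is equivalent to $A(G)$ and $A(H)$ being cospectral together with $A(G)+tJ$ and $A(H)+tJ$ being cospectral for every real $t$. This holds because $\det(xI - A - tJ)$ is affine in $t$ (since $J$ has rank one), so agreement at two values of $t$ forces agreement at all $t$. Expanding
\[
\det(xI-A-tJ) = \det(xI-A)\bigl(1 - t\,\mathbf 1^\top (xI-A)^{-1}\mathbf 1\bigr),
\]
we find that the generating functions $\mathbf 1^\top (xI-A)^{-1}\mathbf 1$ agree. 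Writing $A(G)=\sum_\mu \mu P_\mu$ and $A(H)=\sum_\mu \mu P'_\mu$ in spectral form, this means $\|P_\mu\mathbf 1\|=\|P'_\mu\mathbf 1\|$ for every eigenvalue $\mu$; that is, $G$ and $H$ have equal main angles.

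Next I would construct $Q$ eigenspace by eigenspace. For each eigenvalue $\mu$, the eigenspaces $E_\mu(G)$ and $E_\mu(H)$ have equal dimension. I choose an orthonormal basis of $E_\mu(G)$ whose first vector is $P_\mu\mathbf 1/\|P_\mu\mathbf 1\|$ when $P_\mu\mathbf 1\neq 0$, and likewise for $H$ using $P'_\mu\mathbf 1$. Let $Q_\mu$ be the partial isometry sending the $G$-basis to the $H$-basis, and set $Q' = \sum_\mu Q_\mu$. Then $Q'$ is orthogonal, $Q' A(G) Q'^\top = A(H)$, and
\[
Q'\mathbf 1=\sum_\mu Q_\mu P_\mu\mathbf 1=\sum_\mu P'_\mu\mathbf 1=\mathbf 1,
\]
where the middle equality uses the equal main angles. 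Taking $Q=Q'^\top$ gives \eqref{qa}, and $Q$ is regular because $Q^\top\mathbf 1=\mathbf 1$ implies $Q\mathbf 1=\mathbf 1$.

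The main obstacle is the step extracting equal main angles from generalized cospectrality, specifically the determinant identity and the uniqueness of the partial-fraction decomposition of $\mathbf 1^\top(xI-A)^{-1}\mathbf 1=\sum_\mu \|P_\mu\mathbf 1\|^2/(x-\mu)$. I would handle this carefully via the matrix determinant lemma, evaluating at $t=0$ and $t=-1$. At these values, $\det(xI-A(G)+J)=(-1)^n\det(-(x+1)I-A(\overline{G}))$, so both determinants are fixed by the generalized spectrum.
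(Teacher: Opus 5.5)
Your proof is correct. It is also self-contained where the paper is not: the paper gives no proof of this theorem, quotes it from Johnson and Newman, and uses it only in Lemma~\ref{binv}, (i)$\implies$(ii).

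Your ``$\Leftarrow$'' direction ($Q^\top\mathbf 1=\mathbf 1$, hence $Q^\top JQ=J$ and $Q^\top A(\overline G)Q=A(\overline H)$) is exactly the computation the paper carries out inside that proof.

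Your ``$\Rightarrow$'' direction is the only one the paper actually needs, and it is the standard main-angle argument. The characteristic polynomials of $A$ and of $A-J=-I-A(\overline G)$, combined with the matrix determinant lemma, give equality of $\mathbf 1^\top(xI-A)^{-1}\mathbf 1$. Hence $\|P_\mu\mathbf 1\|=\|P'_\mu\mathbf 1\|$ for every $\mu$. This equality also ensures that $P_\mu\mathbf 1$ and $P'_\mu\mathbf 1$ vanish simultaneously, so your choice of bases is consistent. The eigenspace-by-eigenspace isometry then fixes $\mathbf 1$. The detour through all real $t$ is superfluous, since $t=0$ and $t=-1$ are all you use.

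Your route also shows something the citation hides: for the purpose of Lemma~\ref{binv}, the matrix $Q$ is not needed at all. Substituting $\overline A=J-I-A$ and $J=\mathbf 1\mathbf 1^\top$ writes every $n_\beta(G)$ as a polynomial in $\tr(A^j)$ and $\mathbf 1^\top A^j\mathbf 1$ ($j\ge 0$). These are the power sums of the spectrum and the Laurent coefficients at infinity of $\mathbf 1^\top(xI-A)^{-1}\mathbf 1$. So your intermediate step already proves the invariance of $n_\beta$. The paper's use of a regular orthogonal $Q$ instead buys a one-line conjugation argument, at the price of relying on the cited theorem.
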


Building upon this theorem, the following lemma formally establishes that the $\beta$-closed walk counts are indeed generalized spectral invariants, demonstrating that these numbers $n_{\beta}(G)$ are strictly conserved under generalized cospectrality.

\begin{lemma}
	\label{binv}
	If $G$ and $H$ are two graphs of order $n$, then  the following statements are equivalent:
	
	(i) $G$ and $H$ are generalized cospectral;
	
	(ii)  $n_\beta(G) = n_\beta(H)$
	for every integer $k \ge 1$ and every binary vector $\beta \in \{0,1\}^k$;
	
	(iii) $n_\beta(G)= n_\beta(H)$ for $\beta\in\{\mathbf{1}_k\colon\, 1\le  k\le n\}\cup\{\mathbf{0}_k\colon\, 1\le k\le n\}$.
\end{lemma}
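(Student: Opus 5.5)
We will prove the cycle of implications (i)$\Rightarrow$(ii)$\Rightarrow$(iii)$\Rightarrow$(i).

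\emph{(i)$\Rightarrow$(ii).} By Theorem~\ref{js} there is a regular orthogonal matrix $Q$ with $Q^\top A(G)Q=A(H)$. Write $J$ for the all-ones matrix and $I$ for the identity, so that $A(\overline{G})=J-I-A(G)$. Regularity gives $Q\mathbf{1}=\mathbf{1}$. Since $Q$ is orthogonal, $Q^\top\mathbf{1}=Q^\top Q\mathbf{1}=\mathbf{1}$ as well. Hence $Q^\top JQ=Q^\top\mathbf{1}\mathbf{1}^\top Q=\mathbf{1}\mathbf{1}^\top=J$, and $Q^\top IQ=I$. It follows that $Q^\top M_0(G)Q=J-I-A(H)=M_0(H)$, while $Q^\top M_1(G)Q=M_1(H)$ holds by assumption. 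Because $QQ^\top=I$, the conjugation is multiplicative, so
\[
Q^\top\Bigl(\prod_{i=1}^k M_{\beta_i}(G)\Bigr)Q=\prod_{i=1}^k M_{\beta_i}(H).
\]
Taking traces and using \eqref{eq:beta-trace} together with the similarity invariance of the trace gives $n_\beta(G)=n_\beta(H)$ for every $\beta$.

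\emph{(ii)$\Rightarrow$(iii).} This is immediate, since (iii) is a special case of (ii).

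\emph{(iii)$\Rightarrow$(i).} For $\beta=\mathbf{1}_k$ we have $n_\beta(G)=\tr(A(G)^k)$, the $k$-th power sum of the eigenvalues of $A(G)$. For $\beta=\mathbf{0}_k$ we likewise get the $k$-th power sum of the eigenvalues of $A(\overline{G})$. Equal power sums $p_1,\dots,p_n$ of two multisets of $n$ real numbers determine the elementary symmetric functions $e_1,\dots,e_n$. This follows from Newton's identities, which are valid over $\mathbb{Q}$ because one divides only by $1,\dots,n$. Therefore the characteristic polynomials coincide, so the multisets of eigenvalues coincide. Applying this to $A(G),A(H)$ and to $A(\overline{G}),A(\overline{H})$ shows that $G$ and $H$ are cospectral and that $\overline{G}$ and $\overline{H}$ are cospectral, which is precisely generalized cospectrality.

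The only step needing care is (i)$\Rightarrow$(ii): we must check that the conjugating matrix $Q$ from Theorem~\ref{js} simultaneously carries the complement adjacency matrix to the complement. This is exactly where regularity, via $Q^\top JQ=J$, is used. The remaining steps are routine.
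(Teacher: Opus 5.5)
Your proposal is correct and follows essentially the same route as the paper. Both arguments use the cycle (i)$\Rightarrow$(ii)$\Rightarrow$(iii)$\Rightarrow$(i): Johnson--Newman together with $Q^\top JQ=J$ gives simultaneous conjugation of $A$ and $A(\overline{G})$, and then trace invariance yields (ii), while Newton's identities recover both spectra from the power sums in (iii).
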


\begin{proof}
	We prove the lemma by establishing the cyclic implication (i)$\implies$(ii)$\implies$(iii)$\implies$(i). Suppose (i) holds. Then, by Theorem \ref{js},  there exists an orthogonal matrix $Q$ satisfying \eqref{qa}. Since $Q$ is orthogonal and $Q\mathbf{1} = \mathbf{1}$, it naturally follows that $Q^\top\mathbf{1} = \mathbf{1}$. Let $J$ denote the all-ones matrix and $I$ the identity matrix. We then have $Q^\top J Q = J$. 
	
	Consequently, the adjacency matrix of the complement graph $\overline{G}$ transforms as follows:
	\begin{align*}
		Q^\top A(\overline{G}) Q &= Q^\top (J - I - A(G)) Q \\
		&= Q^\top J Q - Q^\top I Q - Q^\top A(G) Q \\
		&= J - I - A(H) \\
		&= A(\overline{H}).
	\end{align*}
	Thus, for any indicator value $\delta \in \{0,1\}$, the corresponding matrices are simultaneously conjugated by $Q$: $Q^\top M_\delta(G) Q = M_\delta(H)$. Substituting this into the trace formulation \eqref{eq:beta-trace}, the internal $Q Q^\top$ terms cancel out due to orthogonality, yielding:
	\[
	n_\beta(G) = \operatorname{tr}\left(\prod_{i=1}^k Q M_{\beta_i}(H) Q^\top\right) = \operatorname{tr}\left(Q \left(\prod_{i=1}^k M_{\beta_i}(H)\right) Q^\top\right) = n_\beta(H).
	\]
	This proves (i)$\implies$(ii). The implication (ii)$\implies$(iii) is clear. It remains to show (iii)$\implies$(i). Indeed, noting that $n_{\mathbf{1}_k}(G)=\tr(A(G)^k)$ and $n_{\mathbf{0}_k}(G)=\tr(A(\overline{G})^k)$, the equality of these power sums for $1\le k\le n$ completely determines the eigenvalues of both $A(G)$ and $A(\overline{G})$ via Newton's identities. Consequently, $G$ and $H$ must share the same generalized spectrum. Thus (iii)$\implies$(i), completing the proof of Lemma \ref{binv}.
\end{proof}

\section{The induced-subgraph expansion}

For any binary vector $\beta = (\beta_1, \beta_2, \dots, \beta_k) \in \{0,1\}^k$, we have 
$$n_\beta(G) = \operatorname{tr}\left( \prod_{i=1}^k M_{\beta_i}(G) \right).$$
Although there are $2^k$ possible binary sequences of length $k$, many of them yield identical trace evaluations. This redundancy is a direct consequence of the algebraic symmetries of the trace operator under cyclic shift and reversal:

\begin{enumerate}
	\item \textbf{Cyclic Invariance (Rotations):} For any matrices $X_1, \dots, X_k$, the cyclic property $\operatorname{tr}(X_1 X_2 \cdots X_k) = \operatorname{tr}(X_2 \cdots X_k X_1)$ implies that a cyclic shift of the binary sequence $\beta$ does not alter $n_\beta(G)$.
	\item \textbf{Reversal Invariance (Reflections):} Since $G$ is undirected, each indicator matrix is symmetric ($M_{\beta_i}^T = M_{\beta_i}$). Applying the transpose-trace identity $\operatorname{tr}(W) = \operatorname{tr}(W^T)$, we have:
	$$\operatorname{tr}(M_{\beta_1} M_{\beta_2} \cdots M_{\beta_k}) = \operatorname{tr}(M_{\beta_k} \cdots M_{\beta_2} M_{\beta_1}),$$
	which implies that reversing the sequence $\beta$ preserves $n_\beta(G)$.
\end{enumerate}

These two algebraic symmetries naturally define an action of the dihedral group $D_k$ (generated by rotations and reflections) on the set of binary sequences $\{0,1\}^k$. Sequences lying in the same $D_k$-orbit yield the same closed-walk count for every graph $G$.

Geometrically, each orbit under this $D_k$-action corresponds to a unique 2-colored \textit{bracelet} with $k$ beads, where the two colors represent the binary values $1$ and $0$ of the sequence. Thus, when evaluating the induced-subgraph expansions of length $k$, it is sufficient to choose one representative sequence from each bracelet orbit. This completely removes the computational redundancy coming from the cyclic and reversal properties of the trace. By applying P\'olya's Enumeration Theorem (see, e.g., \cite[p.~525]{vanLint2001}), we obtain the exact number of such bracelet orbits.

\begin{proposition}
	The number of binary sequences of length $k$, up to cyclic shifts and reversal, is equal to the number of $2$-colored bracelets of length $k$, namely
	$$N(k) = \frac{1}{2k} \left( \sum_{d|k} \varphi(d) 2^{k/d} + 
	\begin{cases} 
		k \cdot 2^{\frac{k+1}{2}}, & \text{if } k \text{ is odd,} \\
		\frac{k}{2} \cdot 2^{\frac{k+2}{2}} + \frac{k}{2} \cdot 2^{\frac{k}{2}}, & \text{if } k \text{ is even,}
	\end{cases}
	\right)$$
	where $\varphi(d)$ denotes Euler's totient function.
\end{proposition}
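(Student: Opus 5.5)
The plan is to apply Burnside's lemma (the counting form underlying P\'olya's theorem) to the action of the dihedral group $D_k$, of order $2k$, on $X=\{0,1\}^k$. Index positions by $\mathbb{Z}_k$ and let rotations act by $r^j\colon i\mapsto i+j$ and reflections by $s_j\colon i\mapsto j-i$. The action on sequences is $(g\beta)_i=\beta_{g^{-1}(i)}$. Burnside gives
\[
N(k)=\frac{1}{2k}\sum_{g\in D_k}|\mathrm{Fix}(g)|,
\]
and a sequence is fixed by $g$ exactly when it is constant on each cycle of the permutation $g$ on $\mathbb{Z}_k$. Hence $|\mathrm{Fix}(g)|=2^{c(g)}$, where $c(g)$ is the number of cycles of $g$. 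The proof then reduces to computing cycle counts.

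\textbf{Rotations.} The rotation $r^j$ decomposes $\mathbb{Z}_k$ into $\gcd(j,k)$ cycles, each of length $k/\gcd(j,k)$. Group the $j\in\{0,\dots,k-1\}$ by $d=k/\gcd(j,k)$, which is the order of $r^j$. For each divisor $d\mid k$, exactly $\varphi(d)$ values of $j$ have $\gcd(j,k)=k/d$, since these correspond to $j=(k/d)t$ with $\gcd(t,d)=1$. This gives the contribution $\sum_{d\mid k}\varphi(d)2^{k/d}$.

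\textbf{Reflections.} The map $s_j$ is an involution, so its cycles are fixed points and transpositions. A fixed point satisfies $2i\equiv j \pmod k$, and I would split into two cases.

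\emph{$k$ odd.} Each of the $k$ reflections has exactly one fixed point, since $2$ is invertible mod $k$. So it has $1+(k-1)/2=(k+1)/2$ cycles, and the reflections contribute $k\cdot 2^{(k+1)/2}$.

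\emph{$k$ even.} The congruence $2i\equiv j$ has two solutions when $j$ is even and none when $j$ is odd. So $k/2$ reflections (axes through two beads) have $2+(k-2)/2=(k+2)/2$ cycles. The other $k/2$ reflections (axes through edges) have $k/2$ cycles. The contribution is $\frac{k}{2}2^{(k+2)/2}+\frac{k}{2}2^{k/2}$.

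Adding the two contributions and dividing by $2k$ yields the stated formula.

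It remains to note that the orbits of this action are exactly the classes "up to cyclic shifts and reversal" discussed before the proposition. The group generated by one rotation and one reflection is all of $D_k$, and its orbits on colorings are by definition $2$-colored bracelets. The main care point is the even-$k$ reflection count, that is, correctly separating the two reflection types and their fixed-point counts. The rest is routine.
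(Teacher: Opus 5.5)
Your proposal is correct and follows the same route as the paper, which simply invokes P\'olya's Enumeration Theorem (equivalently, Burnside's lemma with the cycle index of $D_k$); you carry out the rotation and reflection cycle counts explicitly, and they all check out, including the even-$k$ split into $k/2$ reflections with $(k+2)/2$ cycles and $k/2$ with $k/2$ cycles. One minor nuance: for $k\le 2$ your $2k$ group elements $r^j, s_j$ do not give distinct permutations of $\mathbb{Z}_k$, but Burnside's lemma holds for non-faithful actions, so the formula remains valid there as well (it gives $N(1)=2$ and $N(2)=3$).
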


This proposition provides a drastic reduction in the search space of matrix products that must be considered. Instead of evaluating all $2^k$ possible binary sequences, we only need to evaluate one canonical representative from each bracelet orbit. To bridge the geometric concept of bracelets with the algebraic computation of $n_\beta(G)$, we can naturally represent each binary sequence $\beta$ by its corresponding matrix product, applying the mapping $1 \mapsto A$ and $0 \mapsto \overline{A}$. For example, the bracelet representative $\beta = (1, 1, 0)$ corresponds to the matrix product $A^2\overline{A}$. Canonical representatives of the bracelet orbits for $k\le 5$, expressed in this intuitive product notation, are listed in Table~\ref{bra}.

\begin{table}[htbp]
	\centering
	\caption{List of bracelet orbit representatives for length $k \le 5$, expressed as matrix products.}
	\label{bra}
	\renewcommand{\arraystretch}{1.2}
	\begin{tabular}{@{}ccl@{}}
		\toprule
		Length ($k$) & Number of orbits $N(k)$ & Representatives (Matrix Products) \\
		\midrule
		$1$ & $2$ & $A, \overline{A}$ \\
		$2$ & $3$ & $A^2, A\overline{A}, \overline{A}^2$ \\
		$3$ & $4$ & $A^3, A^2\overline{A}, A\overline{A}^2, \overline{A}^3$ \\
		$4$ & $6$ & $A^4, A^3\overline{A}, A^2\overline{A}^2, A\overline{A}A\overline{A}, A\overline{A}^3, \overline{A}^4$ \\
		$5$ & $8$ & $A^5, A^4\overline{A}, A^3\overline{A}^2, A^2\overline{A}A\overline{A}, A^2\overline{A}^3, A\overline{A}A\overline{A}^2, A\overline{A}^4, \overline{A}^5$ \\
		\bottomrule
	\end{tabular}
\end{table}

Let $\sim$ denote the equivalence relation on $\{0,1\}^k$ induced by the dihedral group action. The set of equivalence classes (the $k$-bracelets) is denoted as $\{0,1\}^k / D_k$. For algebraic formulation and computational efficiency, we define $\mathcal{B}_k \subset \{0,1\}^k$ to be a fixed set of canonical representatives for these equivalence classes. Furthermore, we denote the collection of all such representatives up to length $\ell$ as $\mathcal{B}_{\le \ell} = \bigcup_{k=1}^{\ell} \mathcal{B}_k$.

While bracelets are elegant algebraic invariants, their true power lies in their combinatorial counterpart. Every bracelet evaluates to a precise linear combination of induced subgraph counts. 

For a graph $H$ and a vector $\beta \in \{0,1\}^k$, we define $s_\beta(H)$ to be the number of \emph{surjective} $\beta$-closed walks on $H$, that is, $\beta$-closed walks $v_1, v_2, \dots, v_{k+1}$ in $H$ such that the set of visited vertices $\{v_1, v_2, \dots, v_k\}$ is exactly $V(H)$.
For an integer $\ell\ge 1$, let $\mathcal{U}_{\le \ell}$ denote the set of isomorphism classes of nonempty graphs with at most $\ell$ vertices. For any graphs $H$ and $G$, let $\ind(H, G)$ denote the number of induced subgraphs of $G$ that are isomorphic to $H$. Let $n_\beta(G)$ denote the total number of $\beta$-closed walks in a graph $G$. As established, $n_\beta(G) = \operatorname{tr}\left(\prod_{i=1}^k M_{\beta_i}\right)$. 

	With these definitions in place, we can now formulate the precise connection between $\beta$-closed walks and induced subgraphs. The following lemma formalizes the induced-subgraph expansion of $n_\beta(G)$ and provides an explicit inclusion--exclusion formula to compute its corresponding coefficients using matrix traces.

\begin{lemma}	\label{ide}
	For every vector $\beta \in \{0,1\}^k$, the total number of $\beta$-closed walks in $G$ can be expanded as
	\begin{equation}
		\label{nid}
		n_\beta(G) = \sum_{H\in\mathcal{U}_{\le k}} s_\beta(H)\ind(H,G),
	\end{equation}
	where 
	\begin{equation}
		\label{sbh}
		s_\beta(H) = \sum_{L \in \mathcal{U}_{\le |V(H)|}} (-1)^{|V(H)|-|V(L)|} \operatorname{tr}\left(\prod_{i=1}^k M_{\beta_i}(L)\right) \ind(L, H),
	\end{equation}
	with $M_{\beta_i}(L)$ denoting the respective adjacency matrix (either $A(L)$ or $A(\overline{L})$) dictated by $\beta_i$.
\end{lemma}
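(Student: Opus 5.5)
Every $\beta$-closed walk in $G$ visits a well-defined vertex set, and we will classify walks by that set. For a walk $W=(v_1,\dots,v_{k+1})$ let $S(W)=\{v_1,\dots,v_k\}$, a nonempty set of at most $k$ vertices. The key observation is locality. Whether $v_iv_{i+1}$ is an edge of $G$ or of $\overline{G}$ depends only on the pair $\{v_i,v_{i+1}\}$, and $\overline{G[S]}=\overline{G}[S]$. Hence $W$ is a $\beta$-closed walk in $G$ with $S(W)=S$ if and only if it is a surjective $\beta$-closed walk in the induced subgraph $G[S]$.

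The one point needing care is $\beta_i=0$. In $\overline{G}$ there are no loops, so consecutive vertices are distinct in both cases, and this is consistent with taking the complement inside $G[S]$. Therefore
\[
n_\beta(G)=\sum_{\emptyset\ne S\subseteq V(G),\,|S|\le k} s_\beta(G[S]).
\]
Since $s_\beta$ is an isomorphism invariant, we group the sets $S$ by the isomorphism class $H\in\mathcal{U}_{\le k}$ of $G[S]$. Each class contributes exactly $\ind(H,G)$ terms, which gives \eqref{nid}.

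For \eqref{sbh}, apply the first identity with $G$ replaced by a graph $H$ on $m$ vertices. It reads $\operatorname{tr}\bigl(\prod_i M_{\beta_i}(H[T])\bigr)$ summed appropriately; precisely, for every $T\subseteq V(H)$,
\[
f(T):=n_\beta(H[T])=\sum_{\emptyset\ne S\subseteq T} g(S),
\]
where $g(S)=s_\beta(H[S])$. Here we use again that $\beta$-closed walks of $H[T]$ with vertex set $S$ are exactly the surjective ones of $H[S]$. Möbius inversion on the Boolean lattice (inclusion--exclusion) then gives
\[
g(V(H))=\sum_{\emptyset\ne T\subseteq V(H)}(-1)^{m-|T|}f(T),
\]
with the convention $f(\emptyset)=g(\emptyset)=0$. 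Grouping the sets $T$ by the isomorphism class $L$ of $H[T]$ and using $f(T)=\operatorname{tr}\bigl(\prod_i M_{\beta_i}(L)\bigr)$ yields \eqref{sbh}. The count of such $T$ is $\ind(L,H)$, and $L$ ranges over $\mathcal{U}_{\le m}$.

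Two consequences fall out automatically. First, terms with $|V(H)|>k$ contribute nothing, since then $s_\beta(H)=0$; this is why the sum in \eqref{nid} may be restricted to $\mathcal{U}_{\le k}$. Second, the coefficient formula is consistent with that restriction.

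The main obstacle is the locality step: making sure the complement is taken relative to the induced subgraph, not relative to $G$. It suffices to note that $A(\overline{G[S]})$ is the principal submatrix of $A(\overline{G})$ indexed by $S$. With that in hand, the rest is routine bookkeeping.
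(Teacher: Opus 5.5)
Your proposal is correct and follows essentially the same route as the paper. Both arguments group the $\beta$-closed walks by their visited vertex set $S$ to obtain \eqref{nid}, then apply inclusion--exclusion (your M\"obius inversion on the Boolean lattice) over subsets of $V(H)$ and group those subsets by isomorphism class to obtain \eqref{sbh}. Your explicit observation that $A(\overline{G[S]})$ is the principal submatrix of $A(\overline{G})$ indexed by $S$ spells out the locality step that the paper uses implicitly.
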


\begin{proof}
	Every $\beta$-closed walk $v_1, v_2, \dots, v_{k+1}$ (where $v_1=v_{k+1}$) in $G$ visits a specific subset of vertices $S = \{v_1, \dots, v_k\} \subseteq V(G)$. Since the walk has length $k$, we clearly have $|S| \le k$. The induced subgraph $G[S]$ belongs to some isomorphism class $H \in \mathcal{U}_{\le k}$. Grouping all $\beta$-closed walks in $G$ by the isomorphism class of the induced subgraph spanned by their vertices, we see that each induced subgraph isomorphic to $H$ contributes exactly $s_\beta(H)$ walks (since the walk must visit all vertices of $G[S]$, corresponding to a surjective $\beta$-closed walk). Summing over all isomorphism classes $H \in \mathcal{U}_{\le k}$ yields \eqref{nid}.
	
	To compute $s_\beta(H)$ for a fixed graph $H$, we apply the principle of inclusion--exclusion. For a subset $X \subseteq V(H)$, the total number of $\beta$-closed walks strictly confined within the induced subgraph $H[X]$ is given by the trace of its corresponding matrix product: $\operatorname{tr}\left(\prod_{i=1}^k M_{\beta_i}(H[X])\right)$. Applying the standard inclusion--exclusion formula over all subsets $X \subseteq V(H)$ to isolate the walks that span exactly $V(H)$, we obtain
	\[
	s_\beta(H) = \sum_{X\subseteq V(H)}(-1)^{|V(H)|-|X|} \operatorname{tr}\left(\prod_{i=1}^k M_{\beta_i}(H[X])\right).
	\]
	Instead of summing over all subsets $X$, we group the terms by the isomorphism class $L$ of the induced subgraph $H[X]$. The number of subsets $X \subseteq V(H)$ such that $H[X] \cong L$ is precisely $\ind(L,H)$. For each such subset, $|X| = |V(L)|$. Furthermore, since isomorphic graphs yield similar adjacency matrices, the trace is invariant under isomorphism, meaning $\tr\left(\prod_{i=1}^k M_{\beta_i}(H[X])\right) = \tr\left(\prod_{i=1}^k M_{\beta_i}(L)\right)$. Substituting these relations into the sum simplifies the expression into \eqref{sbh}, completing the proof.
\end{proof}
For a binary vector $\beta\in\{0,1\}^k$, write
$\overline{\beta}=\mathbf{1}_k-\beta$. We shall use the following simple
complementation symmetry repeatedly.
\begin{lemma}\label{sym}
For any graph $G$ and binary vector $\beta$, we have 

(i) $n_{\overline{\beta}}(G)=n_{\beta}(\overline{G})$;

(ii) $s_{\overline{\beta}}(G)=s_{\beta}(\overline{G})$;

(iii) if the induced-subgraph expansion of $n_\beta(G)$ is
$ n_\beta(G) = \sum_{H\in\mathcal{U}_{\le k}} c_H\ind(H,G),
$
then $
	n_{\overline{\beta}}(G) = \sum_{H\in\mathcal{U}_{\le k}} c_H\ind(\overline{H},G).
$
\end{lemma}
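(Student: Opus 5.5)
The whole lemma rests on one observation: complementation swaps the two indicator matrices. Concretely, $M_0(G)=A(\overline{G})=M_1(\overline{G})$ and $M_1(G)=A(G)=A(\overline{\overline{G}})=M_0(\overline{G})$. In one line, $M_{1-\delta}(G)=M_\delta(\overline{G})$ for $\delta\in\{0,1\}$.

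For (i), I substitute this identity factor by factor into the trace formula \eqref{eq:beta-trace}:
\[
n_{\overline{\beta}}(G)=\tr\Bigl(\prod_{i=1}^k M_{1-\beta_i}(G)\Bigr)=\tr\Bigl(\prod_{i=1}^k M_{\beta_i}(\overline{G})\Bigr)=n_\beta(\overline{G}).
\]
Combinatorially, this says that a sequence of vertices is an $\overline{\beta}$-closed walk in $G$ exactly when it is a $\beta$-closed walk in $\overline{G}$.

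For (ii), the walk-level bijection from (i) is the identity map on vertex sequences. It therefore preserves the set of visited vertices, and $V(G)=V(\overline{G})$. So surjective $\overline{\beta}$-closed walks in $G$ are exactly the surjective $\beta$-closed walks in $\overline{G}$, which gives $s_{\overline{\beta}}(G)=s_\beta(\overline{G})$. As a cross-check one can use the inclusion--exclusion formula from the proof of Lemma \ref{ide}. There one applies (i) to each $H[X]$ together with $\overline{H[X]}=\overline{H}[X]$, and the resulting sum is exactly the one defining $s_\beta(\overline{H})$.

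For (iii), I would not start from an arbitrary expansion; instead I use the specific expansion \eqref{nid}, whose coefficients are $c_H=s_\beta(H)$. By (ii),
\[
n_{\overline{\beta}}(G)=\sum_{H} s_{\overline\beta}(H)\ind(H,G)=\sum_H s_\beta(\overline{H})\ind(H,G).
\]
I then reindex by $H'=\overline{H}$. Complementation is a bijection on $\mathcal{U}_{\le k}$: it fixes the vertex count and respects isomorphism, and I read "nonempty" as meaning at least one vertex. After reindexing the sum becomes $\sum_{H'} c_{H'}\ind(\overline{H'},G)$, as required.

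The main obstacle is the interpretation of "the" expansion in (iii). If the $c_H$ are only assumed to be some coefficients making the identity hold for all $G$, one needs uniqueness of the expansion. That follows from linear independence of the functions $\ind(H,\cdot)$ over $\mathcal{U}_{\le k}$, which holds by the triangularity of $\ind(H,L)$ when the graphs are ordered by vertex count. Alternatively, I can avoid uniqueness altogether: apply the given identity to $\overline{G}$, and use $\ind(H,\overline{G})=\ind(\overline{H},G)$ together with (i). This gives $n_{\overline{\beta}}(G)=n_\beta(\overline{G})=\sum_H c_H\ind(H,\overline{G})=\sum_H c_H\ind(\overline{H},G)$ for any valid coefficients. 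I would present this second route, since it is cleaner.
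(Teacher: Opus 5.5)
Your proposal is correct and follows the paper's proof essentially step for step. Parts (i) and (ii) use the identity map on vertex sequences (your factor-wise substitution $M_{1-\delta}(G)=M_\delta(\overline{G})$ is the same observation in matrix form), and for (iii) the route you chose to present is exactly the paper's: evaluate the given expansion at $\overline{G}$, then apply (i) together with $\ind(H,\overline{G})=\ind(\overline{H},G)$.
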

\begin{proof}
	(i) By definition, the $i$-th step of a $\overline{\beta}$-closed walk in $G$ traverses an edge of $G$ when $\beta_i=0$, and an edge of $\overline{G}$ when $\beta_i=1$. This precisely matches the adjacency condition for the $i$-th step of a $\beta$-closed walk in $\overline{G}$. Therefore, the two sets of walks are strictly identical, yielding $n_{\overline{\beta}}(G)=n_{\beta}(\overline{G})$.
	
	(ii) The trivial identity bijection established in (i) perfectly preserves the set of visited vertices. Hence, a sequence of vertices forms a surjective $\overline{\beta}$-closed walk on $G$ if and only if the exact same sequence forms a surjective $\beta$-closed walk on $\overline{G}$. Thus, $s_{\overline{\beta}}(G) = s_{\beta}(\overline{G})$.
	
	(iii) Evaluating the induced-subgraph expansion on $\overline{G}$ gives 
	$n_{\beta}(\overline{G}) = \sum_{H\in\mathcal{U}_{\le k}} c_H\ind(H,\overline{G})$. 
	Substituting $n_{\beta}(\overline{G}) = n_{\overline{\beta}}(G)$ from (i) and the standard structural identity $\ind(H, \overline{G}) = \ind(\overline{H}, G)$ immediately yields the desired result.
\end{proof}
Equipped with the explicit inclusion--exclusion formula from Lemma \ref{ide} and the combinatorial symmetries established in Lemma \ref{sym}, we are now positioned to fully expand the canonical bracelets identified in Table \ref{bra}. 

Crucially, Lemma \ref{sym}(iii) halves our computational burden. It guarantees that we only need to compute the induced-subgraph expansion for one representative of each complementary pair of binary vectors. For instance, calculating the expansion for $\beta = (1,1,0,1,0)$ immediately yields the expansion for its complement $\overline{\beta} = (0,0,1,0,1)$ by simply taking the graph complements of the resulting subgraphs. The complete combinatorial evaluation of these independent orbits for walks up to length 5 is summarized in the following theorem.

\begin{theorem}\label{expansion}
	For every canonical binary vector $\beta$ of length $1\le k\le 5$, up to
	the dihedral symmetries and the complementation symmetry of
	Lemma~\ref{sym}, the total number of $\beta$-closed walks in a graph $G$
	admits the induced-subgraph expansion displayed in Table~\ref{efsb}.	
\end{theorem}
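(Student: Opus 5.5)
The plan is to compute, for each canonical representative $\beta$ in Table~\ref{bra} modulo complementation, the coefficients $s_\beta(H)$ for all $H\in\mathcal{U}_{\le k}$ using formula \eqref{sbh} of Lemma~\ref{ide}, and then read off the expansion \eqref{nid}. By Lemma~\ref{sym}(iii), only one vector in each complementary pair needs treatment: $A^k$ determines $\overline{A}^k$, and $A^{k-1}\overline{A}$ determines $A\overline{A}^{k-1}$. For $k=4$ the self-complementary orbit $A\overline{A}A\overline{A}$ is handled directly, and $A^2\overline{A}^2$ is likewise self-complementary up to dihedral symmetry. Two reductions shrink the work considerably.

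\emph{Reduction 1: only small graphs contribute.} A $\beta$-closed walk visits at most $k$ vertices, so $s_\beta(H)=0$ whenever $|V(H)|>k$.

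\emph{Reduction 2: walks must actually move.} For $k\ge 1$ each step of a walk joins two distinct vertices, because both $A$ and $\overline{A}$ have zero diagonal. So the single-vertex graph contributes $0$, and $s_\beta(H)$ can be nonzero only when $H$ has an appropriate structure. For example:
\begin{itemize}
\item if $\beta$ contains a $1$, then $H$ must have an edge;
\item for $\beta=\mathbf{1}_k$, the graph $H$ must be connected and admit a spanning closed walk of length $k$.
\end{itemize}

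With these reductions, the procedure for each $\beta$ and each candidate $H$ with $2\le |V(H)|\le k\le 5$ is to evaluate
\[
s_\beta(H)=\sum_{X\subseteq V(H)}(-1)^{|V(H)|-|X|}\operatorname{tr}\Big(\prod_i M_{\beta_i}(H[X])\Big).
\]
One can equally count surjective walks directly, which is often easier and serves as a cross-check.

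For small $k$ the computation is immediate:
\begin{itemize}
\item $n_{(1)}=n_{(0)}=0$;
\item $\operatorname{tr}(A^2)=2\ind(K_2,G)$;
\item $\operatorname{tr}(A\overline{A})=0$;
\item $\operatorname{tr}(A^3)=6\ind(K_3,G)$;
\item $\operatorname{tr}(A^2\overline{A})=6\ind(P_3,G)$, since a walk with pattern $110$ visits three distinct vertices spanning exactly two edges.
\end{itemize}

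For $k=4,5$ the candidate graphs have at most $4$ or $5$ vertices (there are $34$ graphs on five vertices). Here I would organize the count by vertex number:
\begin{itemize}
\item walks on $2$ vertices force the pattern to be constant;
\item walks on $3$ vertices are determined by the edge pattern of a triangle in $G$ or $\overline{G}$;
\item for walks on $4$ or $5$ vertices, I would enumerate the cyclic vertex sequences with the required repetitions and check edge or non-edge conditions against the pattern.
\end{itemize}

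The main obstacle is the sheer bookkeeping for $k=5$: patterns such as $A^2\overline{A}A\overline{A}$ or $A^3\overline{A}^2$ can be supported on many $4$- and $5$-vertex graphs, and sign errors in the inclusion--exclusion are easy to make. I would therefore carry out this step by computer. For each $\beta$ and each graph $H$ in a list of nonisomorphic graphs on at most $5$ vertices, I would compute $s_\beta(H)$ both by direct enumeration of surjective walks and via \eqref{sbh}, and confirm that the two agree.

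Finally, I would validate each resulting row of Table~\ref{efsb} by checking \eqref{nid} numerically on several random graphs $G$ of order $6$--$8$, comparing $\operatorname{tr}(\prod M_{\beta_i}(G))$ with $\sum_H s_\beta(H)\ind(H,G)$. The complementary rows are then obtained by replacing each $H$ with $\overline{H}$, as Lemma~\ref{sym}(iii) permits.
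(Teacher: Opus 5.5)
Your method is the paper's: compute $s_\beta(H)$ from \eqref{sbh} for every graph $H$ on at most $k$ vertices, use Lemma~\ref{sym}(iii) and the dihedral symmetries to restrict to the listed representatives, and automate the $k=4,5$ bookkeeping. The paper works out only $s_{(1,1,0,1,0)}(P_4)=4$ by hand and defers the rest to its published code. Your reductions agree with Table~\ref{efsb}, and so do your values $n_{(1)}(G)=0$, $\tr(A^2)=2\ind(K_2,G)$, $\tr(A\overline{A})=0$ and $\tr(A^3)=6\ind(K_3,G)$.

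One value you state explicitly is wrong, however. $\tr(A^2\overline{A})$ equals $2\ind(P_3,G)$, not $6\ind(P_3,G)$; the table entry $2H_4$, which the paper uses again in the cluster-graph corollary, is correct. A $(1,1,0)$-closed walk $v_1,v_2,v_3,v_1$ requires $v_1v_2,v_2v_3\in E(G)$ and $v_3v_1\notin E(G)$. So $v_2$ is forced to be the middle vertex of the induced $P_3$, and only the two orderings of its end vertices qualify. Of the $3!$ orderings of the three vertices, the other four realize the rotated patterns $(1,0,1)$ and $(0,1,1)$. Your figure $6$ is the combined count over the whole rotation orbit, whereas $n_\beta$ fixes the position of the non-edge step. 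Your planned computer cross-check would expose this, but as written your argument asserts a false identity for that row, and it needs correcting.
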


	\begin{table}[htpb]
		\centering
		\begin{threeparttable}
			\caption{Induced subgraph expansion of canonical bracelets ($k \le 5$)}
			\label{efsb}
			\renewcommand{\arraystretch}{1.5}
			\begin{tabular}{@{}cc p{0.65\textwidth} @{}}
				\toprule
				$k$ &  $\beta\in\{0,1\}^k$ & Expansion Formula $n_\beta(G)$ \\
				\midrule
				$1$ & $(1)$ & $0$ \\
				\midrule
				\multirow{2}{*}{2} & $(1,1)$ & $2H_{2}$ \\
				& $(1,0)^*$ & $0$ \\
				\midrule
				\multirow{2}{*}{3} & $(1,1,1)$ & $6H_{3}$ \\
				& $(1,1,0)$ & $2H_{4}$ \\
				\midrule
				\multirow{4}{*}{4} & $(1,1,1,1)$ & $2H_{2} + 12H_{3} + 4H_{4} + 24H_{5} + 8H_{6} + 8H_{9}$ \\
				& $(1,1,1,0)$ & $4H_{6} + 4H_{7} + 2H_{10}$ \\
				& $(1,1,0,0)^*$ & $2H_{4} + 2\overline{H_{4}} + 2H_{7} + 2\overline{H_{7}} + 6H_{8} + 6\overline{H_{8}}$ \\
				& $(1,0,1,0)^*$ & $8H_{9} + 8\overline{H_{9}} + 4H_{10}$ \\
				\midrule
				\multirow{4}{*}{5} & $(1,1,1,1,1)$ & $30H_{3} + 120H_{5} + 40H_{6} + 10H_{7} + 120H_{11} + 60H_{12} + 20H_{13} + 40H_{16} + 10H_{17} + 20H_{18} + 10H_{27} + 10H_{28}$ \\
				& $(1,1,1,1,0)$ & $4H_{4} + 12H_{6} + 8H_{7} + 6H_{8} + 16H_{9} + 4H_{10} + 12H_{12} + 16H_{13} + 12H_{14} + 8H_{16} + 10H_{17} + 8H_{18} + 12H_{19} + 8H_{20} + 4H_{21} + 8H_{23} + 4H_{24} + 4\overline{H_{24}} + 2H_{25} + 12H_{26} + 4H_{27} + 2\overline{H_{27}}$ \\
				& $(1,1,1,0,0)$ & $2\overline{H_{4}} + 4H_{6} + 8H_{7} + 4\overline{H_{7}} + 18\overline{H_{8}} + 4H_{10} + 4H_{13} + 12H_{14} + 24\overline{H_{15}} + 4H_{17} + 2\overline{H_{17}} + 12H_{19} + 4H_{20} + 4\overline{H_{20}} + 12H_{21} + 4\overline{H_{21}} + 8H_{22} + 12\overline{H_{22}} + 8\overline{H_{23}} + 4\overline{H_{24}} + 6H_{25} + 2H_{27}$ \\
				& $(1,1,0,1,0)$ & $2H_{7} + 8\overline{H_{9}} + 4H_{10} + 8H_{16} + 6H_{17} + 8H_{18} + 4\overline{H_{18}} + 4H_{20} + 4\overline{H_{20}} + 4H_{21} + 4H_{22} + 8H_{23} + 8H_{24} + 4\overline{H_{24}} + 4H_{25} + 12\overline{H_{26}} + 10H_{27} + 4\overline{H_{27}} + 10H_{28}$ \\
				\bottomrule
			\end{tabular}
			\begin{tablenotes}
				\item[\textsuperscript{a}] For brevity, $H_i$ and $\overline{H_i}$ in the
				expansion formulas denote $\operatorname{ind}(H_i,G)$ and
				$\operatorname{ind}(\overline{H_i},G)$, respectively, where the graphs
				$H_i$ are labeled as in Table~\ref{smallgraphs}.
				\item[\textsuperscript{b}] The asterisk $({}^*)$ indicates that
				$\mathbf{1}_k-\beta$ belongs to the same dihedral orbit as $\beta$;
				hence $n_{\beta}(G)=n_{\mathbf{1}_k-\beta}(G)$ for that listed vector.
			\end{tablenotes}
		\end{threeparttable}
	\end{table}
	\begin{table}[htbp]
		\centering
		\caption{All graphs (up to complementation) of order $\le 5$, labeled $H_1$ through $H_{28}$.}
		\label{smallgraphs}
		\setlength{\tabcolsep}{0.5pt}
		\renewcommand{\arraystretch}{0.5}
		\begin{tabular}{ccccccc}
			\includegraphics[width=0.13\textwidth]{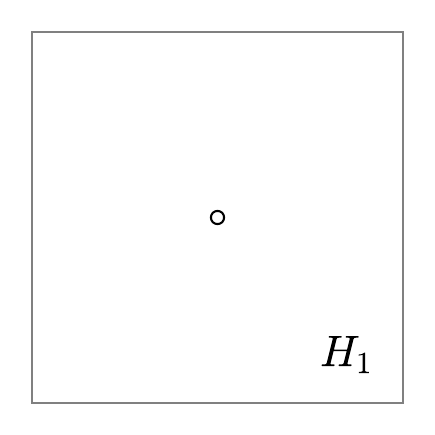} & 
			\includegraphics[width=0.13\textwidth]{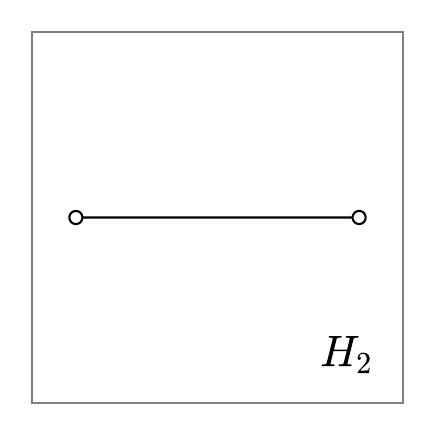} & 
			\includegraphics[width=0.13\textwidth]{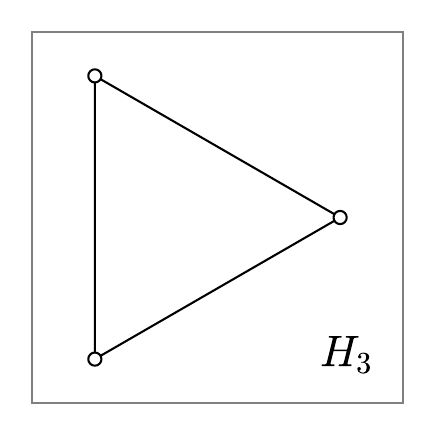} & 
			\includegraphics[width=0.13\textwidth]{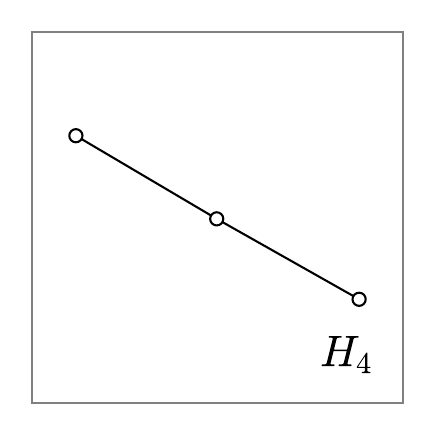} & 
			\includegraphics[width=0.13\textwidth]{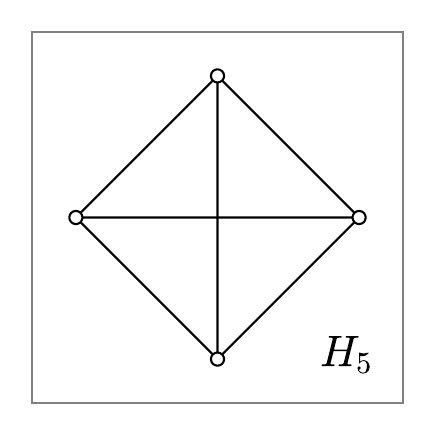} & 
			\includegraphics[width=0.13\textwidth]{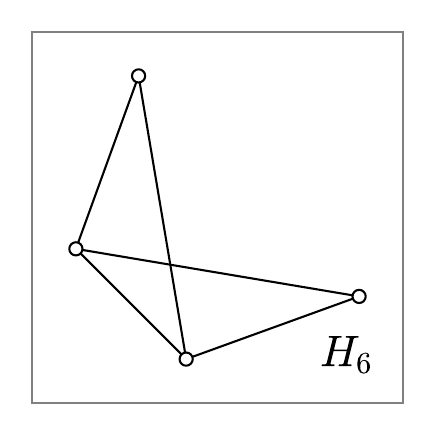} & 
			\includegraphics[width=0.13\textwidth]{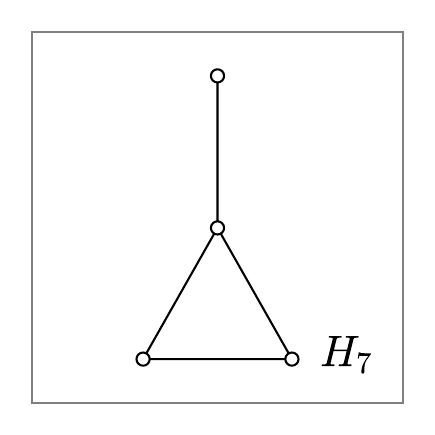} \\
			\includegraphics[width=0.13\textwidth]{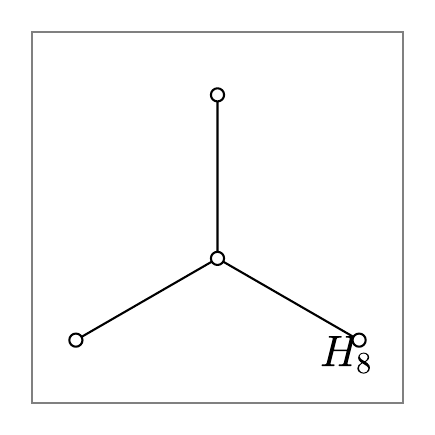} & 
			\includegraphics[width=0.13\textwidth]{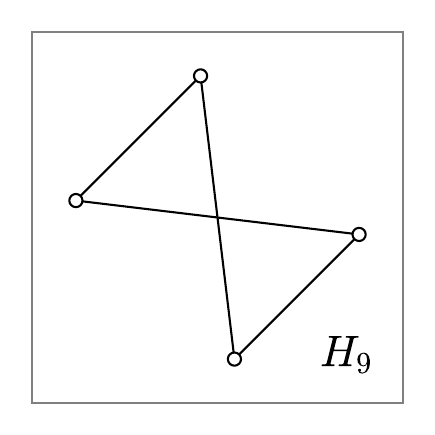} & 
			\includegraphics[width=0.13\textwidth]{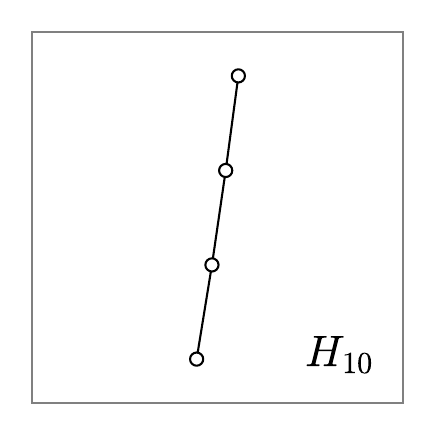} & 
			\includegraphics[width=0.13\textwidth]{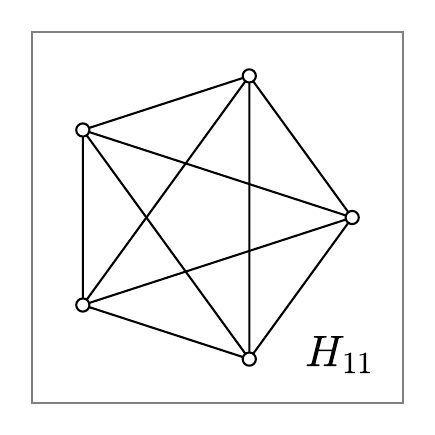} & 
			\includegraphics[width=0.13\textwidth]{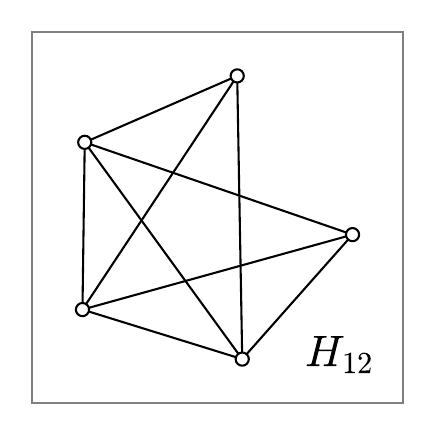} & 
			\includegraphics[width=0.13\textwidth]{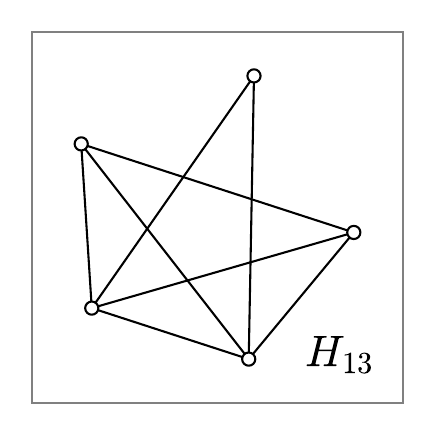} & 
			\includegraphics[width=0.13\textwidth]{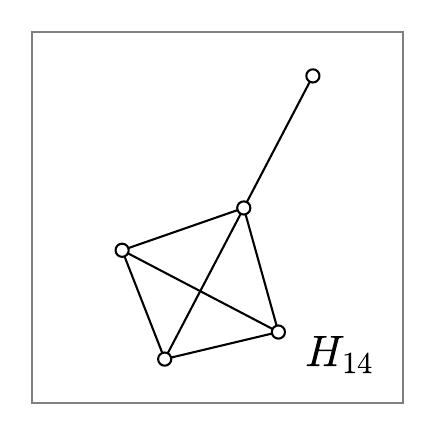} \\
			\includegraphics[width=0.13\textwidth]{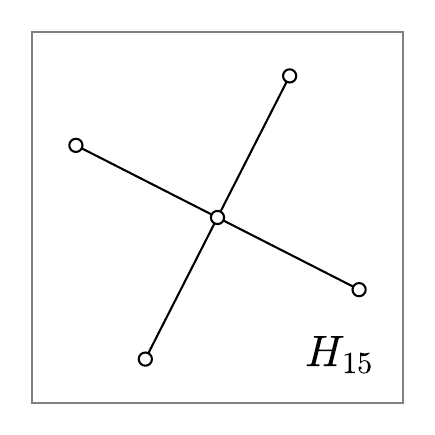} & 
			\includegraphics[width=0.13\textwidth]{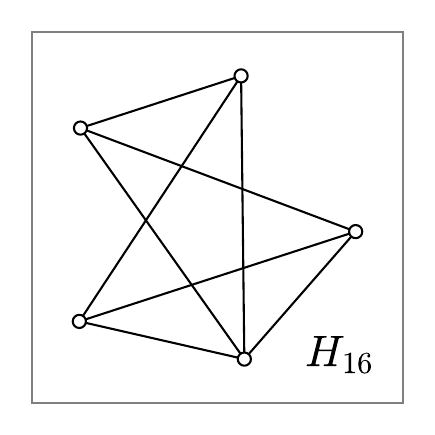} & 
			\includegraphics[width=0.13\textwidth]{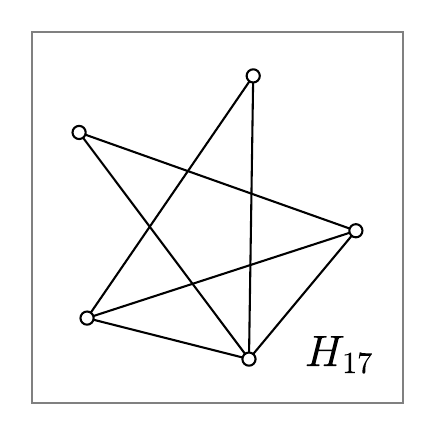} & 
			\includegraphics[width=0.13\textwidth]{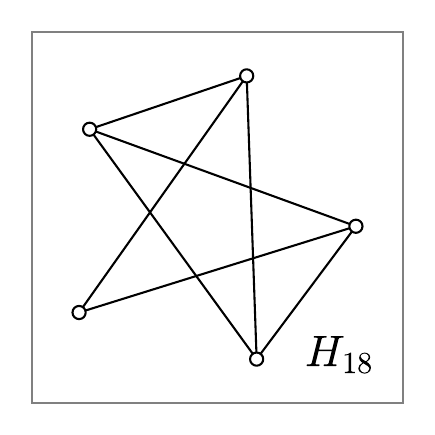} & 
			\includegraphics[width=0.13\textwidth]{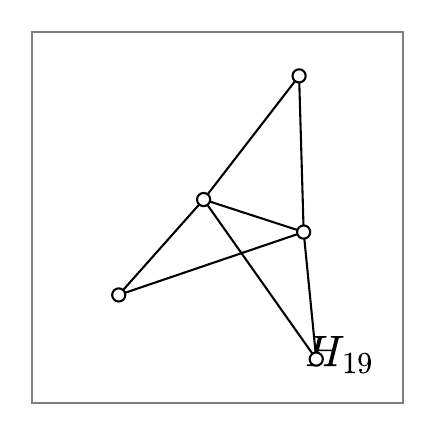} & 
			\includegraphics[width=0.13\textwidth]{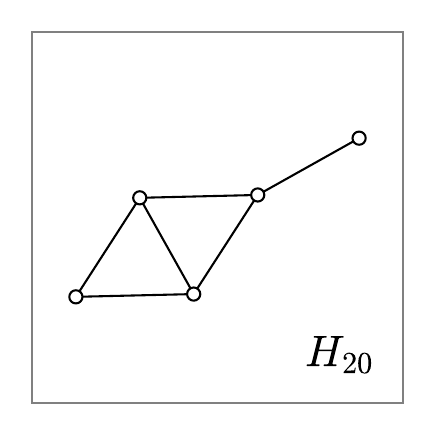} & 
			\includegraphics[width=0.13\textwidth]{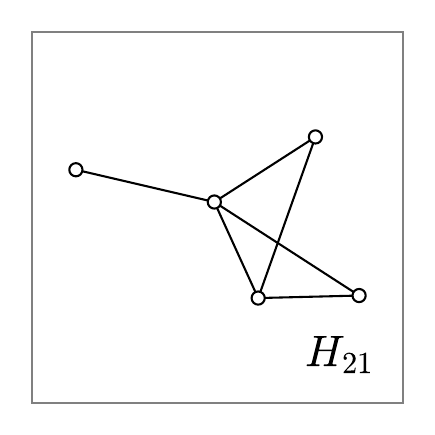} \\				
			\includegraphics[width=0.13\textwidth]{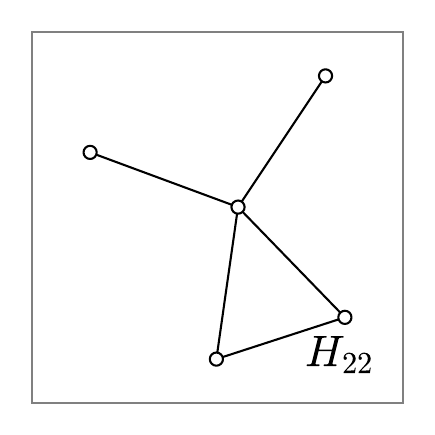} & 
			\includegraphics[width=0.13\textwidth]{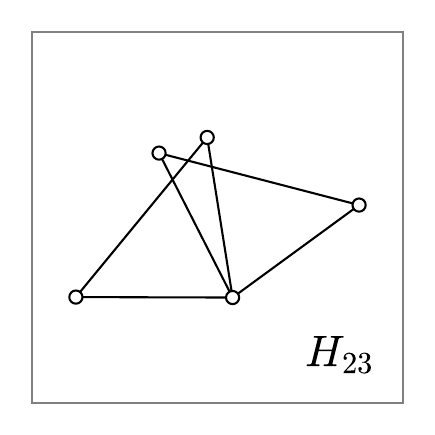} & 
			\includegraphics[width=0.13\textwidth]{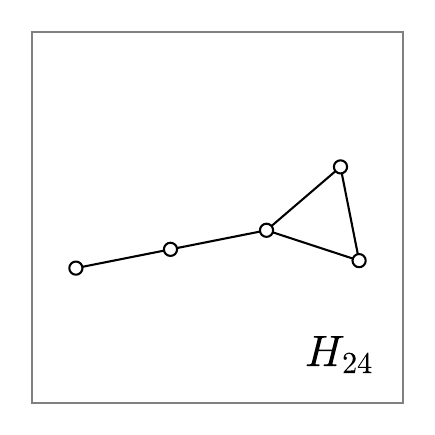} & 
			\includegraphics[width=0.13\textwidth]{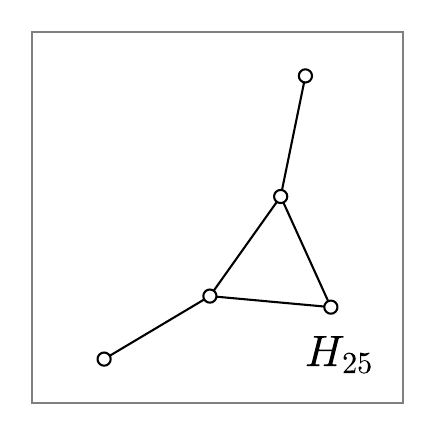} & 
			\includegraphics[width=0.13\textwidth]{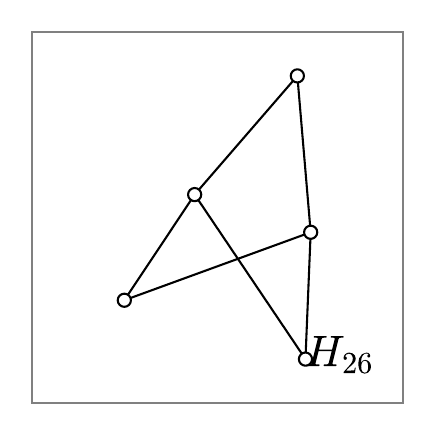} & 
			\includegraphics[width=0.13\textwidth]{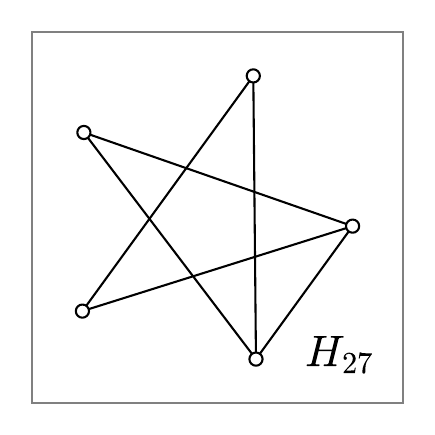} & 
			\includegraphics[width=0.13\textwidth]{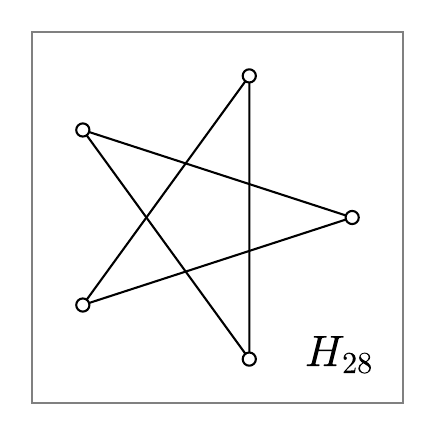} \\
		\end{tabular}
	\end{table}

\begin{proof}
	The theorem is established by the exhaustive application of the inclusion--exclusion formula \eqref{sbh} from Lemma \ref{ide} over all isomorphism classes of graphs up to order 5. Due to the complementation duality proved in Lemma \ref{sym}, it strictly suffices to evaluate the specific canonical vectors $\beta$ listed in Table \ref{efsb}.
	
Rather than detailing the routine algebraic evaluations for all entries, we provide a representative calculation for the coefficient  $s_{\beta}(H_{10})$ with $\beta=(1,1,0,1,0)$,   which is claimed to be $4$ (see Table \ref{efsb}). 

Note that $H_{10}$ is the path graph $P_4$. Since all possible induced subgraphs of $P_4$ constitute a  set 
\begin{equation*}
	\mathcal{L}=\{K_1, K_2, \overline{K_2}, K_1\cup K_2, P_3, P_4\}
\end{equation*} 
we find that
	\begin{equation}\label{p4c}
	s_{\beta}(P_4) = \sum_{L\in 
	\mathcal{L}} (-1)^{4-|V(L)|} \tr\left(\prod_{i=1}^k M_{\beta_i}(L)\right) \ind(L, P_4).
\end{equation}
Since $\beta$ is not the constant vector, we see that the trace expression equals 0 if $L$ or $\overline{L}$ is the edgeless graph. Note that $\ind(P_4,P_4)=1$ and $\ind(K_1\cup K_2,P_4)=\ind(P_3,P_4)=2$. Now, using some direct calculation, we obtain from Eq.~\eqref{p4c} that
$$
	s_\beta(P_4)=\tr\left(\prod_{i=1}^k M_{\beta_i}(P_4)\right)-2\tr\left(\prod_{i=1}^k M_{\beta_i}(P_3)\right)-2\tr\left(\prod_{i=1}^k M_{\beta_i}(K_1\cup K_2)\right)
	=4-0-0=4.
$$
All other coefficients can be computed by the same inclusion--exclusion 
procedure; the full computation for Table \ref{efsb} is automated in \cite{wanggithub2026}.  
\end{proof}
These explicit expansions provide the computational input for the supporter method
developed in the next section. In particular, some graph classes can already be certified
by a single term $n_{\beta}(G)$, while others require suitable linear combinations  of the form $\sum a_{\beta}n_\beta(G)$.
\section{Generalized spectral closedness}
For two (unlabeled) graphs $F$ and $G$, we write $F \preceq G$ if $F$ is an induced subgraph of $G$. This defines a partial order on the set of all finite graphs. Two distinct graphs $G$ and $H$ are \emph{incompatible} if neither $G \preceq H$ nor $H \preceq G$. A family $\mathcal{F}$ is an \emph{antichain} if its members are pairwise incompatible. For example, $\{C_4, P_4, 2K_2\}$ is an antichain, whereas $\{C_4, P_4, P_5\}$ is not, since $P_4 \preceq P_5$. A graph $G \in \mathcal{F}$ is \emph{minimal} if no $H \in \mathcal{F} \setminus \{G\}$ satisfies $H \preceq G$. The set  of minimal elements of $\mathcal{F}$, denoted $\mathcal{F}_{\min}$ is an antichain, and $\Forb(\mathcal{F}) = \Forb(\mathcal{F}_{\min})$. Thus, in the study of graph classes of the form $\Forb(\mathcal{F})$, it is customary to assume that the associated family $\mathcal{F}$ is an antichain. Moreover, to avoid triviality, we usually assume each graph in $\mathcal{F}$ has at least 3 vertices.   

\begin{definition}\normalfont\label{sp}
Let $\mathcal{F}$ be an antichain of graphs and let $\ell\ge \max\{|V(F)|\colon\, F\in \mathcal{F}\}$. A linear combination
\begin{equation*}
	\Phi(G)=\sum_{H\in \mathcal{U}_{\le \ell}}d_H\ind(H,G)
\end{equation*} 
is called an  \emph{$\mathcal{F}$-supporter of order $\ell$} if the coefficients $d_H$ satisfy

(i) $d_H\ge 0$ for $H\in \mathcal{U}_{\le \ell}$;

(ii) $d_H>0$ for $H\in \mathcal{F}$; and

(iii)  $d_H=0$ for $H\in \Forb(\mathcal{F})\cap \mathcal{U}_{\le \ell}$, i.e., $d_H>0$ implies $H$ contains some graph in $\mathcal{F}$ as an induced subgraph. 
\end{definition}
\begin{remark}\normalfont Let $\mathcal{S}=\{H\colon\, H\in \mathcal{U}_{\le \ell} \text{~and~}d_H>0\}$, the supporting set of $\Phi(G)$. Then the last two conditions in Definition \ref{sp} can be equivalently stated as $\mathcal{S}_{\min}=\mathcal{F}$. 
\end{remark}

The importance of Definition \ref{sp} lies in the following simple observation, which gives an equivalent condition for a graph to be $\mathcal{F}$-free.

\begin{lemma}\label{supp}
	Let
	\begin{equation*}
		\Phi(G)=\sum_{H\in \mathcal{U}_{\le \ell}}d_H\ind(H,G)
	\end{equation*} 
	be an $\mathcal{F}$-supporter for some antichain $\mathcal{F}\subset \mathcal{U}_{\le \ell}$. Then for any graph $X$, we have $X\in \Forb(\mathcal{F})$ if and only if $\Phi(X)=0$.
\end{lemma}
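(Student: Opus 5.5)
The plan is to prove the two implications separately, using only Definition~\ref{sp} and the fact that $\ind(H,X)$ is a nonnegative integer for every $H$ and $X$.

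\emph{($\Rightarrow$)} Suppose $X\in\Forb(\mathcal{F})$, and take any $H\in\mathcal{U}_{\le\ell}$ with $d_H>0$. By condition (iii), $H$ contains some $F\in\mathcal{F}$ as an induced subgraph. If also $\ind(H,X)>0$, then $H\preceq X$. Transitivity of $\preceq$ would then give $F\preceq X$, which contradicts $X$ being $\mathcal{F}$-free. Hence $\ind(H,X)=0$ whenever $d_H>0$. Every term $d_H\ind(H,X)$ therefore vanishes: either $d_H=0$, or $d_H>0$ and $\ind(H,X)=0$. This gives $\Phi(X)=0$.

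\emph{($\Leftarrow$)} Argue by contraposition. Suppose $X\notin\Forb(\mathcal{F})$, so $X$ contains some $F\in\mathcal{F}$ as an induced subgraph, and $\ind(F,X)\ge1$. The hypothesis $\mathcal{F}\subset\mathcal{U}_{\le\ell}$ guarantees that $F$ actually indexes a term of $\Phi$. By condition (ii), $d_F>0$, so that term satisfies $d_F\ind(F,X)>0$. By condition (i), all other terms are nonnegative. Hence $\Phi(X)\ge d_F\ind(F,X)>0$, and $\Phi(X)\neq0$.

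There is no real obstacle here. The only points needing care are the transitivity of induced containment in the first direction, and noting that $F$ lies in the index set of $\Phi$ in the second. The argument does not use the antichain assumption.
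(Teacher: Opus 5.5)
Your proof is correct and follows essentially the same argument as the paper's. The forward direction uses condition (iii) together with transitivity of induced containment, and the converse uses $d_F>0$ with nonnegativity of all other terms. Your remark that the antichain hypothesis is never used is also accurate.
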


\begin{proof}
	Assume first that $X \in \Forb(\mathcal{F})$. For any $H \in \mathcal{U}_{\le \ell}$ with $d_H > 0$, condition (iii) dictates that $H \notin \Forb(\mathcal{F})$, meaning $H$ contains some $F \in \mathcal{F}$ as an induced subgraph. If $\ind(H, X) > 0$, then $X$ must also contain $F$ as an induced subgraph, contradicting $X \in \Forb(\mathcal{F})$. Thus, $\ind(H, X) = 0$ whenever $d_H > 0$. Combined with condition (i), this yields $\Phi(X) = 0$.
	
	Conversely, suppose $X \notin \Forb(\mathcal{F})$. By definition, there exists some graph $F \in \mathcal{F}$ such that $F \preceq X$. Since $\mathcal{F} \subset \mathcal{U}_{\le \ell}$, we have $F \in \mathcal{U}_{\le \ell}$ and $\ind(F, X) \ge 1$. By condition (ii), $d_F > 0$. Since $d_H \ge 0$ for all $H \in \mathcal{U}_{\le \ell}$, we conclude that $\Phi(X) \ge d_F \ind(F, X) > 0$, which means $\Phi(X) \neq 0$.
\end{proof}
	An $\mathcal{F}$-supporter is only an induced-subgraph detector for the hereditary
	class $\operatorname{Forb}(\mathcal{F})$; it is not a spectral object in general. Indeed,
	such supporters exist trivially whenever $\mathcal{F}$ is an antichain, since
	\[
	\sum_{F\in\mathcal{F}} \operatorname{ind}(F,G)
	\]
	is already an $\mathcal{F}$-supporter. Therefore, the nontrivial point in our method is
	not the existence of an $\mathcal{F}$-supporter itself, but whether such a supporter can
	be realized as a linear combination of the generalized spectral invariants $n_\beta(G)$.
	This additional requirement leads to the notion of a walk-realizable
	$\mathcal{F}$-supporter.
\begin{definition}\normalfont
	Let $\Phi$ be an $\mathcal{F}$-supporter of order $\ell$. We say that $\Phi$ is
	\emph{walk-realizable} if there exist real numbers
	$a_\beta$, $\beta\in \mathcal{B}_{\leq \ell}$, such that
	\[
	\Phi(G)=\sum_{\beta\in \mathcal{B}_{\leq \ell}} a_\beta n_\beta(G)
	\]
	for every graph $G$.
\end{definition} 
Equivalently, using the induced-subgraph expansion
\[
n_\beta(G)
=
\sum_{H\in \mathcal{U}_{\leq \ell}} s_\beta(H)\operatorname{ind}(H,G),
\]
an $\mathcal{F}$-supporter
\[
\Phi(G)=\sum_{H\in \mathcal{U}_{\leq \ell}} d_H\operatorname{ind}(H,G)
\]
is walk-realizable if and only if its coefficient vector satisfies
\[
d_H=
\sum_{\beta\in \mathcal{B}_{\leq \ell}} a_\beta s_\beta(H)
\qquad
\text{for every }H\in \mathcal{U}_{\leq \ell}
\]
for some real numbers $a_\beta$, $\beta\in \mathcal{B}_{\leq \ell}$.

We are now in a position to present the main result of this paper. This theorem establishes a direct algebraic criterion for the generalized spectral closedness of $\mathcal{F}$-free graph classes using mixed trace invariants.

\begin{theorem}\label{main}
		Let $\mathcal{F}$ be an antichain of graphs. If for some $\ell\ge \max\{|V(F)|\colon\, F\in \mathcal{F}\}$, there exists a walk-realizable $\mathcal{F}$-supporter $\Phi$ of order $\ell$, then $\Forb(\mathcal{F})$ is generalized spectrally closed.

\end{theorem}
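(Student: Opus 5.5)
The plan is to combine Lemma~\ref{supp} with Lemma~\ref{binv}. Let $G\in\Forb(\mathcal{F})$ and let $H$ be any graph generalized cospectral with $G$. We must show $H\in\Forb(\mathcal{F})$. Note first that generalized cospectral graphs have the same number of vertices, since the spectrum of $A(G)$ is a multiset of size $|V(G)|$. Hence $G$ and $H$ are graphs of the same order $n$, and Lemma~\ref{binv} applies.

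\textbf{Step 1.} By walk-realizability, there are reals $a_\beta$, $\beta\in\mathcal{B}_{\le\ell}$, such that $\Phi(X)=\sum_{\beta\in\mathcal{B}_{\le\ell}}a_\beta n_\beta(X)$ for every graph $X$. This is an identity valid for all graphs, and in particular for $G$ and $H$. We should check one point: the expansion of $n_\beta$ in Lemma~\ref{ide} ranges over $\mathcal{U}_{\le k}\subseteq\mathcal{U}_{\le\ell}$. Therefore the coefficient identity $d_H=\sum_\beta a_\beta s_\beta(H)$ over $\mathcal{U}_{\le\ell}$ does yield equality of the functions $\Phi$ and $\sum_\beta a_\beta n_\beta$ on every graph. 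Since the definition already asserts equality for every graph, this check is only a consistency remark.

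\textbf{Step 2.} By Lemma~\ref{binv}, (i)$\Rightarrow$(ii), we have $n_\beta(G)=n_\beta(H)$ for every $\beta\in\mathcal{B}_{\le\ell}$. Taking the same linear combination gives $\Phi(H)=\Phi(G)$.

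\textbf{Step 3.} Since $G\in\Forb(\mathcal{F})$, the ``only if'' direction of Lemma~\ref{supp} gives $\Phi(G)=0$. Hence $\Phi(H)=0$, and the ``if'' direction of Lemma~\ref{supp} gives $H\in\Forb(\mathcal{F})$. To apply Lemma~\ref{supp} we need $\mathcal{F}\subset\mathcal{U}_{\le\ell}$; this holds because $\ell\ge\max|V(F)|$ and each $F$ is nonempty.

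There is no real obstacle here; the substantive content sits in Lemmas~\ref{binv} and \ref{supp}. The only delicate points are the same-order requirement, needed to invoke Lemma~\ref{binv}, and the remark that the nonnegativity of the $d_H$ is what makes $\Phi$ vanish exactly on $\Forb(\mathcal{F})$. The coefficients $a_\beta$ themselves may have arbitrary sign.
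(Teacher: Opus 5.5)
Your proof is correct and follows the paper's argument: Lemma~\ref{binv} shows $\Phi$ is invariant under generalized cospectrality, and Lemma~\ref{supp}, applied in both directions, turns $\Phi(G)=0$ into $\Phi(H)=0$ and hence $H\in\Forb(\mathcal{F})$. Your extra checks are details the paper leaves implicit: equal vertex counts so that Lemma~\ref{binv} applies, and $\mathcal{F}\subset\mathcal{U}_{\le\ell}$ so that Lemma~\ref{supp} applies.
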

\begin{proof}
		Let $G\in \Forb(\mathcal{F})$, and let $H$ be generalized cospectral with $G$. Since $\Phi$ is a linear combination of $\beta$-closed walk counts, Lemma \ref{binv} implies $\Phi(H)=\Phi(G)$. Since $\Phi$ is an $\mathcal{F}$-supporter, Lemma \ref{supp} gives $\Phi(G)=0$. Hence $\Phi(H)=0$. Applying Lemma \ref{supp} again, we obtain $H\in\Forb(\mathcal{F})$. Therefore $\Forb(\mathcal{F})$ is generalized spectrally closed.
\end{proof}
We first illustrate Theorem \ref{main} in the simplest situation, where the desired
walk-realizable $\mathcal{F}$-supporter is given by a single invariant $n_{\beta}(G)$ for some $\beta$. These examples
show how the induced-subgraph expansions in Table \ref{efsb} directly translate into spectral
closedness certificates, without invoking the full linear programming machinery.

\begin{corollary}
	The class of cluster graphs is generalized spectrally closed.
\end{corollary}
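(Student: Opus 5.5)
Cluster graphs are exactly $\Forb(P_3)$, so by Theorem~\ref{main} it suffices to exhibit a walk-realizable $\{P_3\}$-supporter of order $3$. The natural candidate is a single invariant $n_\beta(G)$ of length $3$. Table~\ref{efsb} gives $n_{(1,1,0)}(G)=2\ind(H_4,G)$. I will check that $H_4$ is $P_3$ (or, if the labelling places $P_3$ as $\overline{H_4}$, use the complementary vector $(0,0,1)$ via Lemma~\ref{sym}(iii)).

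To confirm this without relying on the picture in Table~\ref{smallgraphs}, I would compute directly which graphs admit a surjective $(1,1,0)$-closed walk. Such a walk is $v_1,v_2,v_3,v_1$ with $v_1v_2,v_2v_3\in E(G)$ and $v_3v_1\in E(\overline G)$. In particular $v_3\ne v_1$, and $v_1,v_2,v_3$ are pairwise distinct, since consecutive vertices are adjacent in $G$ or $\overline G$. Hence the walk visits exactly three vertices, and they induce a path with middle vertex $v_2$. Each induced $P_3$ yields exactly $2$ such walks, one for each orientation of the path. So $n_{(1,1,0)}(G)=2\ind(P_3,G)$, which is also $\tr(A^2\overline A)$.

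The combination $\Phi(G)=\tfrac12 n_{(1,1,0)}(G)=\ind(P_3,G)$ satisfies conditions (i)--(iii) of Definition~\ref{sp} with $\mathcal F=\{P_3\}$ and $\ell=3$: all coefficients are nonnegative, $d_{P_3}=1>0$, and no other coefficient is nonzero. It is walk-realizable by construction, with $a_{(1,1,0)}=\tfrac12$ and all other $a_\beta=0$, since $(1,1,0)$ is a bracelet representative in $\mathcal B_3$. Theorem~\ref{main} then gives the corollary.

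The only potential obstacle is bookkeeping: making sure the table label matches $P_3$ rather than its complement $K_1\cup K_2$. The direct count above removes this dependence.
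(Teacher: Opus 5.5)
Your proposal is correct and matches the paper's proof: it also takes $n_{(1,1,0)}(G)=2\ind(P_3,G)$ from Table~\ref{efsb} as a walk-realizable $\{P_3\}$-supporter and applies Theorem~\ref{main}. Your direct count of surjective $(1,1,0)$-closed walks is a valid self-contained check of that table entry, and it confirms $H_4=P_3$.
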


\begin{proof}
	Cluster graphs are exactly $\operatorname{Forb}(P_3)$. By Table \ref{efsb},
	\[
	n_{(1,1,0)}(G)=2\operatorname{ind}(P_3,G).
	\]
	Thus $n_{(1,1,0)}(G)$ is a walk-realizable $\{P_3\}$-supporter. The conclusion follows
	immediately from Theorem \ref{main}.
\end{proof}
\begin{remark}\normalfont
	We note that any cluster graph is  determined by its spectrum \cite[Proposition 6]{vanDam2003}, which implies that the class of cluster graphs  is actually spectrally closed with respect to the ordinary spectrum.  However, it is impossible to construct a walk-realizable $\mathcal{F}$-supporter for cluster graphs using only ordinary closed walks. Indeed, if 
	\[
	\Phi(G)=\sum_{k\le \ell} a_k n_{\mathbf{1}_k}(G)=\tr\sum_{k=1}^\ell a_kA^k \text{~with~} a_\ell\neq0
	\]
	is an $\{P_3\}$-supporter, then we have
	
	 (i) $\ell\ge 3$ since otherwise the coefficient of $\ind(P_3,G)$ in the induced-subgraph expansion of $\Phi(G)$ would be zero, and consequently, 
	 
	(ii) in the induced-subgraph expansion of $\Phi(G)$, the coefficient of $\ind(K_\ell, G)$ receives contribution only from the term $a_\ell \tr(A^\ell)$. Since any sequence of $\ell$ distinct vertices forms a surjective closed walk on the complete graph $K_\ell$, this coefficient is exactly $a_\ell \times \ell!$, which is nonzero as $a_\ell \neq 0$.
	However, $K_\ell$ is a cluster graph (it is $P_3$-free). By Definition~\ref{sp}(iii) of an $\mathcal{F}$-supporter, the coefficient of any $P_3$-free graph must be zero. This yields a contradiction.
\end{remark}
\begin{proof}[Proof of Theorem~\ref{tgsc}]
	Recall that threshold graphs are exactly
	\[
	\operatorname{Forb}(C_4,2K_2,P_4).
	\]
	By Table \ref{efsb}, for $\beta=(1,0,1,0)$ we have
	\[
	n_{\beta}(G)
	=
	8\operatorname{ind}(C_4,G)
	+8\operatorname{ind}(2K_2,G)
	+4\operatorname{ind}(P_4,G).
	\]
	Hence $n_{\beta}(G)$ is a walk-realizable
	$\{C_4,2K_2,P_4\}$-supporter. By Theorem \ref{main}, the class of threshold graphs is
	generalized spectrally closed.
\end{proof}
The two examples above are particularly simple because the single term $n_\beta(G)$ is already
the required supporter. In general, however, no individual $n_\beta(G)$ need have the
correct induced-subgraph support or the required nonnegativity pattern. We therefore
search for a supporter in the linear span of all mixed traces up to a prescribed length.
This leads naturally to a finite linear programming feasibility problem.

Let $\ell\geq \max\{|V(F)|:F\in\mathcal{F}\}$. Since the induced-subgraph expansion
coefficients $s_\beta(H)$ for canonical bracelets can be explicitly computed, searching
for a walk-realizable $\mathcal{F}$-supporter of order $\ell$ is equivalent to finding real
variables $a_\beta$, $\beta\in\mathcal{B}_{\leq \ell}$, such that, with
\[
d_H=
\sum_{\beta\in\mathcal{B}_{\leq \ell}} a_\beta s_\beta(H),
\]
the following constraints hold:
\[
d_H=0
\qquad
\text{for all }
H\in \operatorname{Forb}(\mathcal{F})\cap\mathcal{U}_{\leq \ell},
\]
\[
d_H\geq 0
\qquad
\text{for all }
H\in \mathcal{U}_{\leq \ell}\setminus\operatorname{Forb}(\mathcal{F}),
\]
and
\[
d_H\geq 1
\qquad
\text{for all }
H\in\mathcal{F}.
\]
Since this is a finite linear system with rational coefficients, feasibility
over $\mathbb{R}$ implies the existence of a rational feasible point. In
practice, the numerical output of an LP solver can therefore be converted into,
and then verified as, an exact rational certificate. The code for 
generating and solving these LP systems, as well as for verifying the 
resulting certificates, is publicly available at~\cite{wanggithub2026}.

The class of chain graphs provides the first example in this paper where we use a
nontrivial linear combination of mixed traces rather than a single mixed trace. The LP
formulation above produces the following exact certificate.
\begin{proof}[Proof of Theorem~\ref{cgsc}]
	Recall that chain graphs are exactly
	\[
	\operatorname{Forb}(2K_2,C_3,C_5).
	\]
	Let $\mathcal{F}=\{2K_2,C_3,C_5\}$ and set $\ell=5$. Solving the LP above over
	$\mathcal{B}_{\leq 5}$ yields the following mixed-trace combination:
\begin{equation}
	\label{supchain}
	\begin{aligned}
		\Phi(G)
		={}&
		\frac{1}{8}n_{(1,1)}(G)
		-\frac{1}{12}n_{(1,1,1)}(G)
		+\frac{1}{4}n_{(1,1,0)}(G)
		-\frac{1}{8}n_{(1,1,1,1)}(G)\\
		&-\frac{1}{4}n_{(1,1,1,0)}(G)
		+\frac{1}{8}n_{(1,0,1,0)}(G)
		+\frac{1}{10}n_{(1,1,1,1,1)}(G).
	\end{aligned}
\end{equation}
	Using the expansions in Table \ref{efsb}, this becomes
	\[
	\begin{aligned}
		\Phi(G)
		={}&
		\overline{H_9}+H_3+H_{28}\\
		&+
		9H_5+2H_6+12H_{11}+6H_{12}+2H_{13}
		+4H_{16}+H_{17}+2H_{18}+H_{27},
	\end{aligned}
	\]
where $H_i$ and $\overline{H_i}$ denote $\operatorname{ind}(H_i,G)$ and
$\operatorname{ind}(\overline{H_i},G)$, respectively. All coefficients in this induced-subgraph expansion are nonnegative. The graphs $\overline{H_9},H_3,H_{28}$ are
	respectively $2K_2,C_3,C_5$, and every remaining graph appearing with positive
	coefficient contains one of $2K_2,C_3,C_5$ as an induced subgraph. Hence $\Phi$ is a
	walk-realizable $\mathcal{F}$-supporter. Theorem \ref{main} implies that
	$\operatorname{Forb}(2K_2,C_3,C_5)$ is generalized spectrally closed.
\end{proof}
The computational approach demonstrated above provides a systematic methodology for
testing generalized spectral closedness of $\mathcal{F}$-free graph classes. We applied this framework to the fundamental graph classes listed in Table \ref{fc}.  We utilize standard LP solvers to search for a walk-realizable $\mathcal{F}$-supporter. If such a supporter is successfully found (as demonstrated for cluster, threshold, and chain graphs), Theorem \ref{main} immediately guarantees that the corresponding class is generalized spectrally closed. If the LP system turns out to be infeasible, we turn to a computational search for counterexamples---specifically, pairs of generalized cospectral graphs where one belongs to $\Forb(\mathcal{F})$ and the other does not. The discovery of such a pair definitively proves the class is not generalized spectrally closed, as illustrated by the counterexamples for claw-free, trivially perfect, cograph, and bull-free graph classes (see Figures \ref{claw}--\ref{bull}).

\begin{table}[htbp]
	\centering
	\caption{Generalized spectral closedness of some fundamental graph classes.}
	\label{fcf}
	\begin{tabular}{@{}lcc@{}}
		\toprule
		\textbf{Graph class}  & \textbf{Generalized spectral closedness} & \textbf{Certificate / Counterexample}\\
		\midrule
		Cluster graphs        & Yes &  $n_{(1,1,0)}(G)$\\
		Threshold graphs      & Yes &  $n_{(1,0,1,0)}(G)$ \\
		Chain graphs          & Yes &  Eq.~\eqref{supchain} \\
		\midrule
		Claw-free graphs      & No  & Fig.~\ref{claw} \\
		Cographs              & No  &  Fig.~\ref{cog} \cite{wang2025} \\
		Bull-free graphs      & No  &  Fig.~\ref{bull} \\
		Trivially perfect graphs & No & Fig.~\ref{cog}\\
		\midrule
		Split graphs          & Unknown & Not available\\
		\bottomrule
	\end{tabular}
\end{table}
	\begin{figure}
	\centering
	\includegraphics[height=2.5cm]{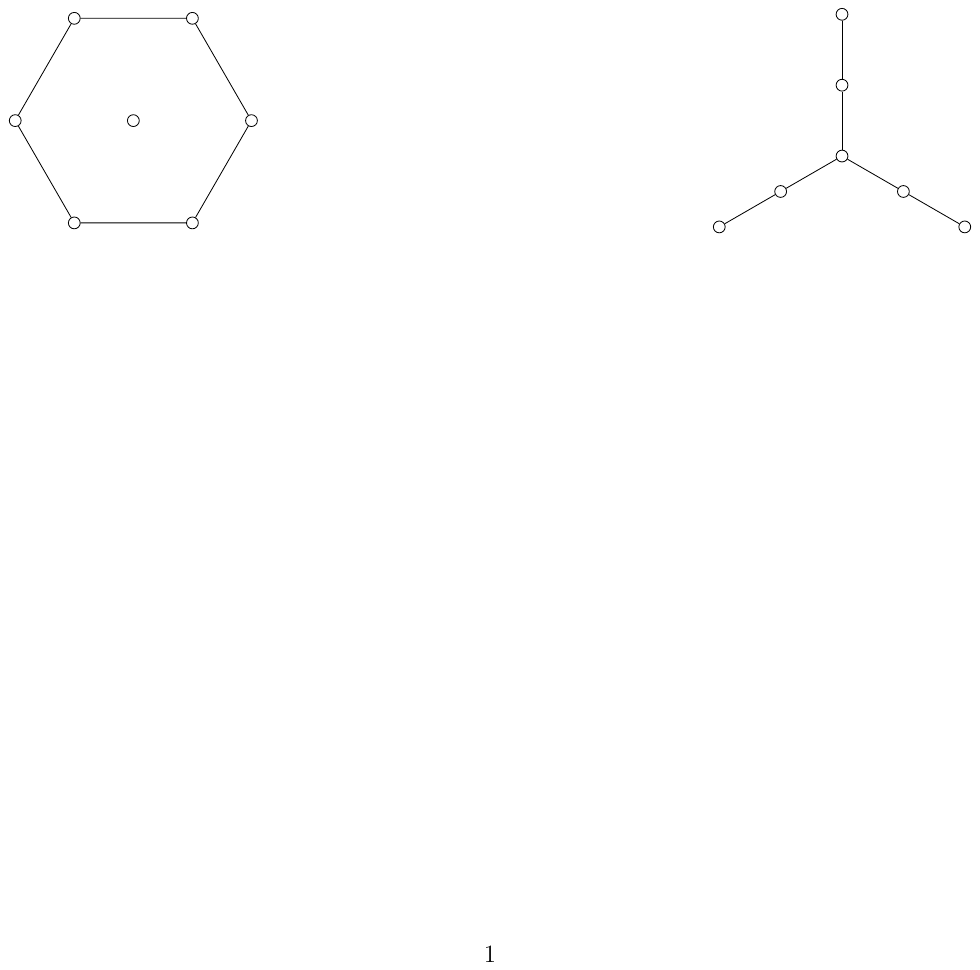}
	\caption{A claw-free graph (left) and its generalized cospectral mate  with a claw (right).}
	\label{claw}
\end{figure}
\begin{figure}
	\centering
	\includegraphics[height=2.5cm]{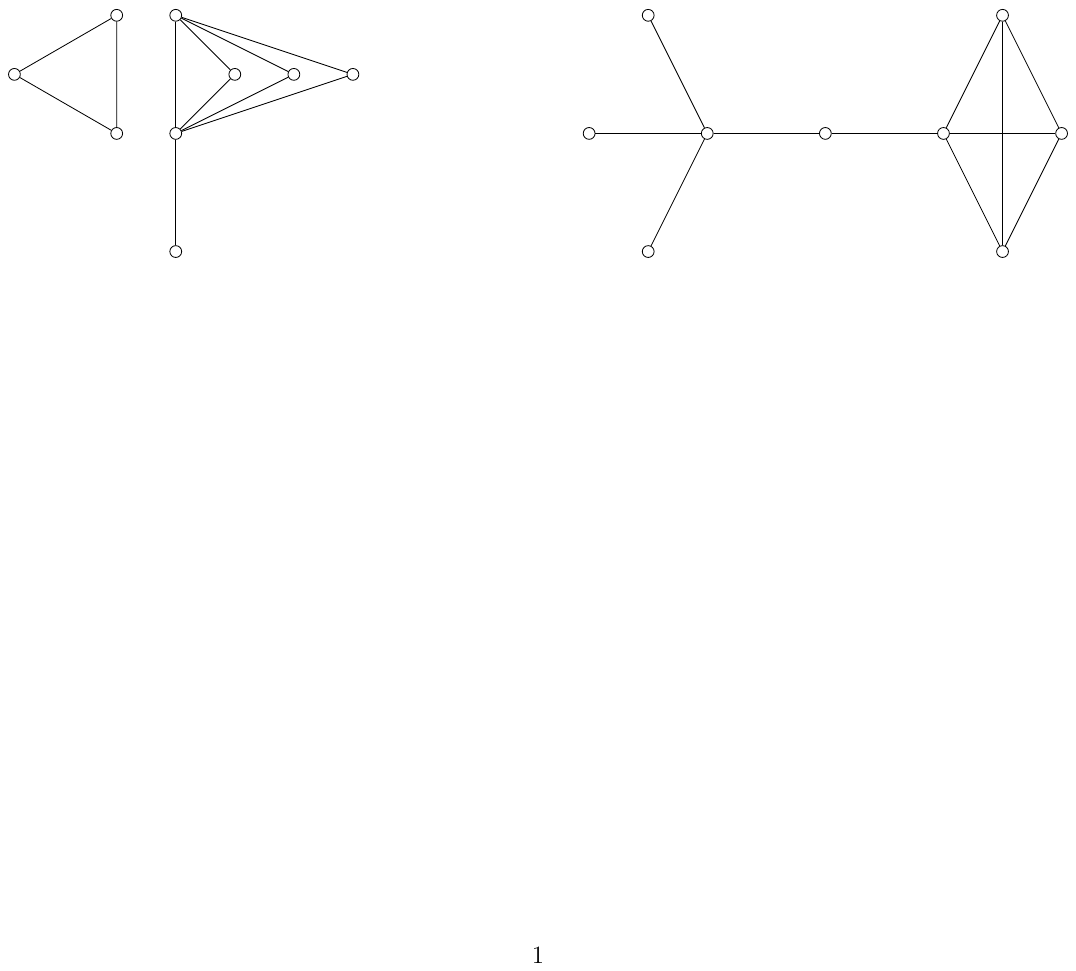}
	\caption{A $\{P_4,C_4\}$-free graph (left) and its generalized cospectral mate   with a $P_4$ (right).}
	\label{cog}
\end{figure}

\begin{figure}
	\centering
	\includegraphics[height=2cm]{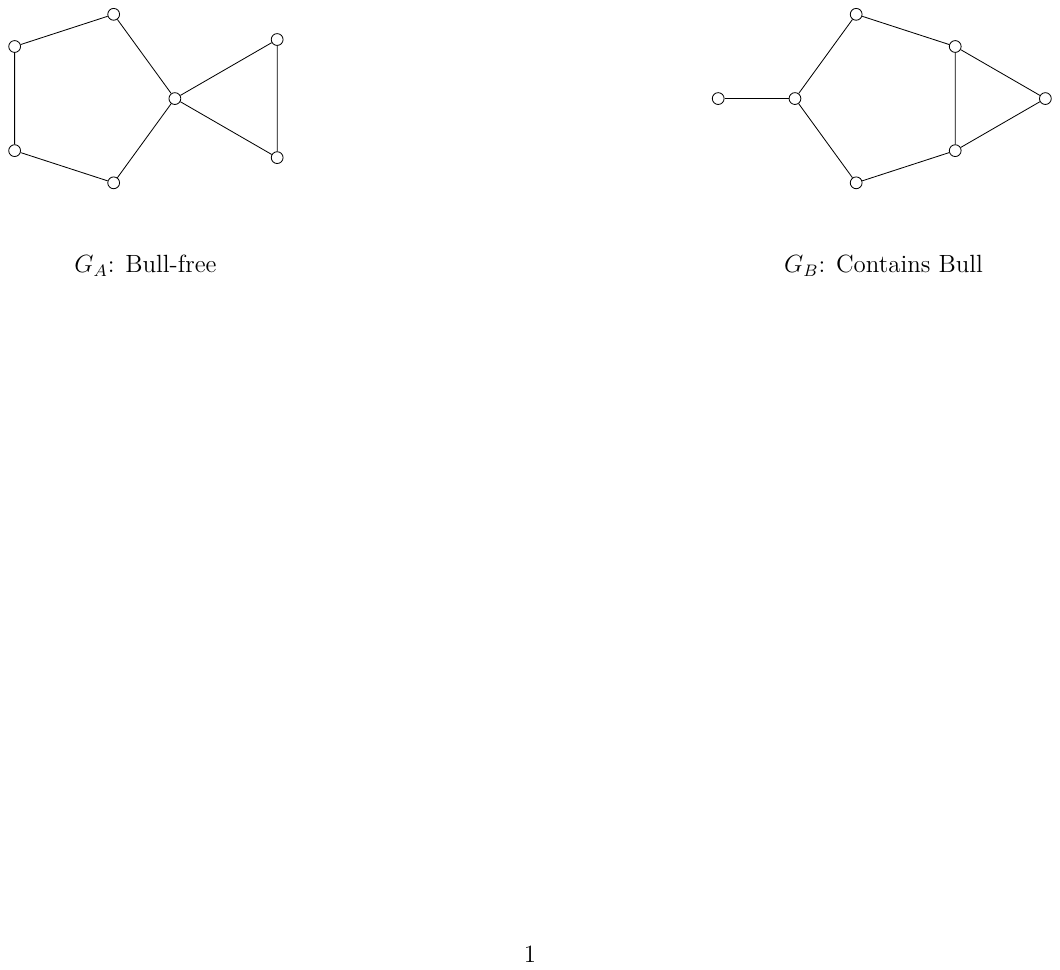}
	\caption{A bull-free graph (left) and its generalized cospectral mate with a bull (right).}
	\label{bull}
\end{figure}

For split graphs, the LP solver indicates that there are no  walk-realizable $\mathcal{F}$-supporters up to order $\ell=7$. Simultaneously, no generalized cospectral counterexamples have been found through an exhaustive computational search over small graphs. Recalling the situation with cluster graphs, where ordinary closed walk counts lacked the degrees of freedom to provide a certificate despite the class being spectrally closed, these computations suggest that the split graph case may require ideas beyond the finite-order certificates found above. This motivates the following conjecture.
 
\begin{conjecture}
Let $\mathcal{F}=\{2K_2,C_4,C_5\}$. The class $\operatorname{Forb}(\mathcal{F})$ of split graphs is generalized spectrally closed, but it admits no walk-realizable $\mathcal{F}$-supporter of order $\ell$ for any integer $\ell \ge 5$.
\end{conjecture}

\section*{Declaration of competing interest}
There is no conflict of interest.

\section*{Acknowledgments and AI disclosure}
W.~Wang is partially supported by the National Natural Science Foundation of China (Grant No.~12001006) and the Wuhu Science and Technology Project, China (Grant No.~2024kj015). Q.~Tang is partially supported by the National Key Research and Development Program of China (Grant No.~2023YFA1010203). 

During the preparation of this work, the authors used AI systems as auxiliary tools. Specifically, GPT-5.5 Pro was used to assist with the early formulation and presentation of the walk-realizable supporter construction for threshold graphs. Additionally, Gemini 3.1 Pro was used to facilitate literature searching, assist with algorithmic programming, and support linguistic refinement. All AI-generated outputs and suggestions were checked, critically evaluated, and restructured by the authors. The authors maintain complete oversight and assume full intellectual and legal responsibility for the integrity, accuracy, and final content of this paper.

\end{document}